\documentclass[12pt,a4paper]{article}

\usepackage{amsmath,amsthm,amssymb,amsfonts}
\usepackage{mathtools}
\usepackage{geometry}
\usepackage{hyperref}
\usepackage{enumitem}
\usepackage{booktabs}
\usepackage{array}
\usepackage{tikz}
\usetikzlibrary{calc} 
\usepackage{amsthm}
\usepackage{amssymb}
\usepackage{graphicx}
\usepackage{blindtext}
\usepackage{hyperref}
\numberwithin{equation}{section}
\newtheorem{theorem}{Theorem}[section]
\newtheorem{proposition}[theorem]{Proposition}
\newtheorem{lemma}[theorem]{Lemma}

\newtheorem{definition}[theorem]{Definition}

\newtheorem{example}[theorem]{Example}
\newtheorem{assumption}[theorem]{Assumption}

\newcommand{\WpD}{W^{1,p}_{\mathcal{V}^D}(\mathcal G)}

\newcommand{\abs}[1]{\left|#1\right|}

\title{On the \texorpdfstring{$p$}{p}-torsional rigidity of compact metric
	graphs: a sharp Kohler--Jobin inequality}
	
\author{Sedef \"Ozcan\thanks{%
		Department of Mathematics, Faculty of Science, Dokuz Eyl\"ul University,
		\.Izmir, Turkey. E-mail: \texttt{sedef.taskin@deu.edu.tr}.
		ORCID: \href{https://orcid.org/0009-0005-4636-9406}{0009-0005-4636-9406}.}}

\date{}

\begin{document}
	
	\maketitle
\begin{abstract}
	We investigate the $p$-torsional rigidity for the $p$-Laplacian,
	$1<p<\infty$, on compact connected metric graphs equipped with Dirichlet
	conditions on a nonempty set $\mathcal{V}^D$ of degree-one vertices and
nonlinear Kirchhoff conditions at all remaining vertices. We
	establish the existence, uniqueness, and positivity of the 
	$p$-torsion function, together with a variational characterization of the
	$p$-torsional rigidity. Our main contribution is the derivation of two
	sharp isoperimetric inequalities. We first prove a $p$-Saint-Venant
	inequality, showing that, among all compact metric graphs of prescribed
	total length, the $p$-torsional rigidity is maximized precisely by the
	interval with a single Dirichlet endpoint. We then derive a sharp
	$p$-Kohler--Jobin inequality, providing a scale-invariant lower bound for
	the first eigenvalue of the $p$-Laplacian in terms of the $p$-torsional
	rigidity. These results yield nonlinear counterparts, in the setting of
	compact metric graphs, of the classical Saint-Venant and Kohler--Jobin
	inequalities, and extend the linear theory, where $p=2$, developed by Mugnolo
	and Pl\"umer to the full range $1<p<\infty$.
	
	\medskip
	\noindent\textbf{Keywords:} metric graphs; $p$-Laplacian; $p$-torsional
	rigidity; Saint-Venant inequality; Kohler--Jobin inequality;
	isoperimetric inequalities.
	
	\smallskip
	\noindent\textbf{Mathematics Subject Classification (2020):}
	Primary 34B45; Secondary 35J92, 35P30, 35R02, 81Q35.
\end{abstract}

\section{Introduction}

Torsional rigidity is a fundamental quantity in elasticity theory,
originating in the work of Saint-Venant on the torsion of prismatic rods
\cite{SV}; its variational formulation was later established by P\'olya
\cite{Polya, pol3}. Beyond its origins in elasticity, torsional rigidity has been studied in spectral geometry and shape optimization, owing to its close connections with isoperimetric inequalities, eigenvalue estimates, and other geometric functionals associated with elliptic operators. For a bounded domain $\Omega\subset\mathbb{R}^d$, the
torsional rigidity associated with the Dirichlet Laplacian is defined by
\[
T(\Omega)=\int_\Omega v\,dx,
\]
where $v$ is the unique positive solution of
\[
\begin{cases}
	-\Delta v =1, & x\in\Omega,\\
	v=0, & x\in\partial\Omega.
\end{cases}
\]
Equivalently, it admits the variational characterization
\[
T(\Omega)
=
\sup_{u\in H_0^1(\Omega)\setminus\{0\}}
\frac{\|u\|_{L^1(\Omega)}^2}
{\|\nabla u\|_{L^2(\Omega)}^2}.
\]
The interplay between torsional rigidity, the first Dirichlet eigenvalue,
and isoperimetric inequalities has been extensively studied; see, for
instance, the monograph of P\'olya and Szeg\H{o} \cite{PS}. Among the
classical results in this direction, two are particularly relevant for the
present paper. The Saint-Venant inequality identifies the ball as the
maximizer of torsional rigidity among domains of prescribed measure,
whereas the Kohler--Jobin inequality \cite{KJ} gives a sharp
scale-invariant lower bound for the product of the first Dirichlet
eigenvalue and a suitable power of the torsional rigidity.

These inequalities also admit nonlinear counterparts for the
$p$-Laplacian. Fragal\`a, Gazzola, and Lamboley \cite{FGL13} established
sharp bounds for the $p$-torsional rigidity of convex planar domains,
while Kohler--Jobin  developed a rearrangement approach for the
$p$-Laplacian, lastly extended by Brasco \cite{Brasco}, leading to a $p$-Kohler--Jobin inequality together with a
variational characterization of the $p$-torsional rigidity.

Over the past decades, compact metric graphs have become a well-established framework for the study of differential operators on network-like structures, with applications ranging from quantum mechanics to wave propagation in thin structures. Their spectral and variational theory has developed rapidly, revealing deep connections between the geometry of a graph and the analytic properties of the associated operators; see, for instance, \cite{BK,kuch1, kuch2}.
Among the various spectral and variational quantities that have recently attracted attention in this setting is torsional rigidity.

In this direction, the linear case $p=2$ was systematically investigated
by Mugnolo and Pl\"umer \cite{MuPl}, who established a variational
characterization of the torsional rigidity together with graph analogues
of the Saint-Venant, P\'olya--Szeg\H{o}, and Kohler--Jobin inequalities.
In recent years, torsional rigidity has also been investigated in
different graph settings. For combinatorial graphs, Bifulco and Mugnolo
\cite{BM25} introduced and studied the $p$-torsional rigidity associated
with the discrete $p$-Laplacian. In a different direction, \"Ozcan and
T\"aufer \cite{OT24} studied torsional rigidity on metric graphs with
$\delta$-vertex conditions. In parallel, the spectral theory of the $p$-Laplacian on metric graphs
has been developed by Del Pezzo and Rossi \cite{DR}, who established the
existence, simplicity, and basic properties of the first eigenvalue
under Dirichlet--Kirchhoff conditions.

Although rearrangement methods are well established both for Euclidean
domains and for the linear Laplacian on metric graphs, their extension to
the nonlinear setting is far from straightforward. Indeed, the nonlinear
structure of the $p$-Laplacian alters both the variational framework and
the level-set analysis, while the Kirchhoff transmission conditions must
be reformulated in terms of nonlinear fluxes. Consequently, the arguments
developed in the linear case cannot be transferred directly. In this paper
we overcome these difficulties and establish nonlinear counterparts of the
Saint-Venant and Kohler--Jobin inequalities for the $p$-Laplacian on
compact metric graphs.

We consider the $p$-Laplacian on compact metric graphs with Dirichlet
conditions imposed on a nonempty set $\mathcal V^D$ of vertices of degree
one and natural nonlinear Kirchhoff conditions at all remaining vertices,
collected in the set $\mathcal V^N:=\mathcal V\setminus\mathcal V^D$. The
latter require the conservation of the nonlinear flux at every vertex of
$\mathcal V^N$ and therefore constitute the natural analogue of the
classical Kirchhoff conditions for the linear Laplacian; at vertices of
degree one they reduce to Neumann conditions.
Let
\[
\WpD:=\{u\in W^{1,p}(\mathcal G):u|_{\mathcal V^D}=0\}
\]
denote the natural energy space.
The $p$-torsion function $w_p\in\WpD$ is the unique weak
solution of
\[
-\Delta_p w_p =1,
\]
subject to
\[
w_p|_{\mathcal V^D}=0,
\qquad
\sum_{e\in\mathcal E_v}
|w_{p,e}'(v)|^{p-2}
\partial_{\nu_e}w_{p,e}(v)=0,
\quad
v\in\mathcal V^N,
\]
where $\mathcal E_v$ denotes the set of edges incident with $v$ and
$\partial_{\nu_e}$ the outward derivative along the edge $e$.
The corresponding $p$-torsional rigidity is defined by
$$T_p(\mathcal{G},\mathcal{V}^D):=\left(\int_{\mathcal{G}} w_p\,dx\right)^{p-1},$$
which reduces to the classical torsional rigidity for $p=2$.
We prove the following nonlinear variational
characterization:
\[
T_p(\mathcal{G},\mathcal{V}^D)
\;=\;
\sup_{u\in\WpD,\,u\not\equiv 0}
\frac{\bigl(\int_{\mathcal{G}}\abs{u}\,dx\bigr)^p}{\int_{\mathcal{G}}\abs{u'}^p\,dx}.
\]
The variational characterization also yields several structural properties
of the $p$-torsional rigidity. In particular, we establish its natural
scaling law and show that the normalized quantity
\(
p\mapsto
|\mathcal G|^{1/p}
T_p(\mathcal G,\mathcal V^D)^{1/p}
\)
is non-increasing on $(1,\infty)$.

The proof of the $p$-Saint-Venant inequality requires a nonlinear
adaptation of the rearrangement method developed in the linear case.
While the distribution function is treated in a similar spirit, the coarea
analysis naturally involves the nonlinear quantity $|u'|^{p-1}$, replacing
the linear gradient appearing in the case $p=2$, and the key step is a
rearrangement inequality showing that the $p$-Dirichlet energy does not
increase under decreasing rearrangement. This allows us to compare the
$p$-torsional rigidity of an arbitrary compact metric graph with that of
the interval of the same total length with a Dirichlet
endpoint.

The proof of the $p$-Kohler--Jobin inequality is more delicate. The key
idea, going back to Kohler-Jobin and improved for the $p$-Laplacian by
Brasco \cite{Brasco}, is to perform the rearrangement at the level of the
first eigenfunction: its level sets give rise to a modified torsional rigidity, which is used to calibrate the length of a one-dimensional
comparison interval in such a way that both the first eigenvalue and the
torsional rigidity of that interval are controlled by the corresponding
quantities of the graph. The nonlinear nature of the $p$-Laplacian
prevents a direct adaptation of the linear graph argument and requires a
refined level-set analysis combining  rearrangement method with
the metric graph framework developed in \cite{MuPl}.

The paper is organized as follows. In
Section~\ref{sec:setup} we introduce the necessary background on compact
metric graphs, the Sobolev spaces, and
the variational formulation of the $p$-Laplacian with Dirichlet and
$p$-Kirchhoff vertex conditions. Section~\ref{sec:torsion} establishes
the existence, uniqueness, and positivity of the $p$-torsion function,
introduces the $p$-torsional rigidity together with its variational
characterization, and studies its scaling properties and monotonicity
with respect to $p$. In
Section~\ref{sec:saintvenant} we derive the $p$-Saint-Venant inequality
and characterize the equality case by means of a rearrangement
argument. Finally, Section~\ref{sec:spectral} is devoted to the spectral
applications, where we establish the $p$-P\'olya--Szeg\H{o} and sharp
$p$-Kohler--Jobin inequalities.
\section{Preliminaries on metric graphs}
\label{sec:setup}

We begin by fixing the terminology for compact metric graphs; for a
comprehensive introduction we refer to~\cite{BK}.

A \emph{compact metric graph} $\mathcal G$ is obtained from a finite
combinatorial graph by identifying the endpoints of compact intervals
$[0,\ell_e]$, $\ell_e>0$, according to the underlying graph structure.
We denote by $\mathcal{V}$ and $\mathcal{E}$ the sets of vertices and edges of
$\mathcal G$, respectively, and
\[
|\mathcal G|:=\sum_{e\in \mathcal{E}}\ell_e
\]
for the total length of $\mathcal G$.
For each vertex $v\in \mathcal{V}$, we let
\(
\mathcal{E}_v:=\{e\in \mathcal{E}:\,v\in e\}
\)
be the set of edges incident with $v$, and define
$\deg(v):=|\mathcal{E}_v|$. We say that $\mathcal G$ is \emph{connected} if
every pair of points in $\mathcal G$ can be joined by a continuous path
contained in $\mathcal G$.

For a function $u:\mathcal G\to\mathbb C$, we write $u_e$ for its restriction
to the edge $e\in\mathcal E$. If a vertex $v$ is incident with an edge $e$,
then
\(
\partial_{\nu_e}u_e(v)
\)
denotes the derivative of $u_e$ at $v$ taken in the outward direction along
the edge $e$.

For $1\le p\le\infty$, we define
\[
L^p(\mathcal G)
:=
\bigoplus_{e\in\mathcal E}L^p(0,\ell_e),
\]
equipped with the norm
\[
\|u\|_{L^p(\mathcal G)}
=
\begin{cases}
	\displaystyle
	\left(
	\sum_{e\in\mathcal E}
	\int_0^{\ell_e}|u_e(x)|^p\,dx
	\right)^{1/p},
	& 1\le p<\infty,\\[1.2em]
	\displaystyle
	\max_{e\in\mathcal E}
	\operatorname*{ess\,sup}_{x\in(0,\ell_e)}
	|u_e(x)|,
	& p=\infty.
\end{cases}
\]
The following assumption holds throughout the paper.

\begin{assumption}\label{ass:main}
	The metric graph $\mathcal G$ is connected and has finitely many edges.
	The \emph{Dirichlet set} $\mathcal{V}^D\subseteq \mathcal{V}$ is nonempty and consists only of
	vertices of degree one. We write $\mathcal{V}^N:=\mathcal{V}\setminus\mathcal{V}^D$ and refer to the vertices
	in $\mathcal{V}^N$ as \emph{Kirchhoff vertices}.
\end{assumption}

The assumption that every Dirichlet vertex has degree one is made for
convenience rather than necessity. Indeed, a Dirichlet condition
disconnects the incident edges, so any Dirichlet vertex of higher degree
can be replaced by several degree-one Dirichlet vertices without changing
any of the qquantities studied below.

Throughout the paper we fix the
H\"older conjugate of the exponent $1<p<\infty$ by $q=p/(p-1)$.
We define
\[
W^{1,p}(\mathcal G):=
\bigl\{u:\mathcal G\to\mathbb R:\,
u|_e\in W^{1,p}(0,\ell_e)\ \forall e\in \mathcal{E},
\ u\text{ is continuous at every }v\in \mathcal{V}
\bigr\},
\]
equipped with the norm
\(
\|u\|_{W^{1,p}(\mathcal G)}^p
:=
\|u\|_{L^p(\mathcal{G})}^p
+
\|u'\|_{L^p(\mathcal{G})}^p
.
\)
Since $p>1$, the embedding
$W^{1,p}(0,\ell_e)\hookrightarrow C([0,\ell_e])$
holds on every edge, so the continuity condition at the vertices is
well defined and every $u\in W^{1,p}(\mathcal G)$ is in fact continuous on
all of $\mathcal G$. Furthermore, by \cite[Theorem~2.2]{DR}, the injection
\(
W^{1,p}(\mathcal G)\hookrightarrow L^r(\mathcal G)
\)
is compact for every $1\le r\le\infty$.

We introduce the closed subspace
\[
\WpD
:=
\{u\in W^{1,p}(\mathcal G):u(v)=0 \text{ for all }v\in \mathcal{V}^D\},
\]
which will serve as the energy space for all variational problems in
this paper. Since $W^{1,p}(\mathcal G)$ is isomorphic to a closed
subspace of the reflexive space
$\prod_{e\in\mathcal E}W^{1,p}(0,\ell_e)$, both $W^{1,p}(\mathcal G)$
and its closed subspace $\WpD$ are reflexive.

The Dirichlet condition, combined with the connectedness of
$\mathcal G$, forces functions in $\WpD$ to be small in a quantitative
sense. This is the content of the following Poincar\'e inequality, which
we will use repeatedly: for every $u\in\WpD$,
\[
\|u\|_{L^\infty(\mathcal G)}
\le
|\mathcal G|^{1/q}\,\|u'\|_{L^p(\mathcal G)},
\qquad\text{and hence}\qquad
\|u\|_{L^p(\mathcal G)}
\le
C_P\,\|u'\|_{L^p(\mathcal G)}
\quad\text{with }C_P:=|\mathcal G|.
\]
Indeed, given $x\in\mathcal G$ and $v_0\in\mathcal V^D$, connectedness
provides an injective path $P\subseteq\mathcal G$ from $v_0$ to $x$, of
length at most $|\mathcal G|$, and
\[
|u(x)|=|u(x)-u(v_0)|
\le\int_P|u'|\,dy
\le|\mathcal G|^{1/q}\|u'\|_{L^p(\mathcal G)}
\]
by H\"older's inequality; the second estimate follows since
$\|u\|_{L^p(\mathcal G)}\le|\mathcal G|^{1/p}\|u\|_{L^\infty(\mathcal G)}$.

The energy functional associated to the $p$-Laplacian on $\mathcal G$ is defined by
\begin{equation}\label{eq:Fp}
	\mathcal F_p(u)
	:=
	\frac1p\int_{\mathcal G}|u'|^p\,dx,
	\qquad
	u\in\WpD.
\end{equation}
Since $1<p<\infty$, the map $t\mapsto |t|^p$ is strictly convex, and therefore
$\mathcal F_p$ is strictly convex on $\WpD$; moreover, the Poincar\'e
inequality shows that $\mathcal F_p$ is coercive. Given
$f\in L^1(\mathcal G)$, a function
$u\in\WpD$ is called a \emph{weak solution} of
\(
-\Delta_pu=f
\)
if
\begin{equation}\label{eq:plap_duality}
	\int_{\mathcal G}|u'|^{p-2}u'v'\,dx
	=
	\int_{\mathcal G}fv\,dx,
	\qquad
	v\in\WpD.
\end{equation}
The weak formulation, which follows from an integration by parts, is equivalent to
\begin{alignat}{2}
	-(|u_e'|^{p-2}u_e')'
	&=f|_e,
	&&\qquad e\in \mathcal{E},\notag\\
	u(v)&=0,
	&&\qquad v\in \mathcal{V}^D,\notag\\
	\sum_{e\in \mathcal{E}_v}|u_e'(v)|^{p-2}\partial_{\nu_e}u_e(v)
	&=0,
	&&\qquad v\in \mathcal{V}^N.
	\label{eq:Kirch_setup}
\end{alignat}
Condition~\eqref{eq:Kirch_setup} is the natural $p$-Kirchhoff vertex
condition. It expresses the conservation of the nonlinear flux at every
vertex of $\mathcal V^N$ and reduces to the classical Kirchhoff condition
when $p=2$.
\section{\texorpdfstring{$p$}{p}-Torsion Function
	and \texorpdfstring{$p$}{p}-Torsional Rigidity}
\label{sec:torsion}

In this section, we introduce the central object of the paper: the
$p$-torsion function of a compact metric graph. We derive its
existence, uniqueness, and strict positivity away from the Dirichlet
vertices. The total mass of the torsion
function then leads us to the notion of $p$-torsional rigidity and to
its variational characterization.
\subsection{The \texorpdfstring{$p$}{p}-torsion function}

The following definition is the natural nonlinear analogue of the
classical torsion problem.

\begin{definition}\label{def:torsionfunction}
	Let $\mathcal G$ be a connected compact metric graph, 
	$\mathcal{V}^D\subseteq \mathcal{V}$ be a nonempty set of degree-one vertices, and 
	$1<p<\infty$.	A function $w_p\in\WpD$ is called the \emph{$p$-torsion function} of
	$\mathcal G$ if
	\begin{alignat}{2}
		-(|w_{p,e}'|^{p-2}w_{p,e}')' &= 1
		&&\qquad \text{on each } e\in \mathcal{E}, \label{eq:pLap}\\
		w_p(v) &=0
		&&\qquad v\in \mathcal{V}^D, \label{eq:Dir}\\
		\sum_{e\in \mathcal{E}_v}|w_{p,e}'(v)|^{p-2}\partial_{\nu_e}w_{p,e}(v) &=0
		&&\qquad v\in\mathcal{V}^N. \label{eq:Kirch}
	\end{alignat}
\end{definition}

Our first task is to show that this problem is well posed. The proof
relies on the direct method of the calculus of variations, applied to a
functional whose Euler--Lagrange equation is precisely the torsion
problem.

\begin{theorem}\label{thm:exist}
	Let $\mathcal G$ be a connected compact metric graph, 
	$\mathcal{V}^D\subseteq\mathcal{V}$ be a nonempty set of degree-one vertices, and 
	$1<p<\infty$.
	Then there exists a unique $p$-torsion function
	$w_p\in W^{1,p}_{\mathcal{V}^D}(\mathcal G)$.
\end{theorem}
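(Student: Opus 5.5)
We apply the direct method to the functional
\[
J(u):=\mathcal F_p(u)-\int_{\mathcal G}u\,dx=\frac1p\int_{\mathcal G}|u'|^p\,dx-\int_{\mathcal G}u\,dx,\qquad u\in\WpD .
\]

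\textbf{Coercivity and boundedness below.} By the Poincar\'e inequality of Section~\ref{sec:setup}, $\|u\|_{L^1(\mathcal G)}\le|\mathcal G|\,\|u\|_{L^\infty(\mathcal G)}\le|\mathcal G|^{1+1/q}\|u'\|_{L^p(\mathcal G)}$. Hence
\[
J(u)\ge\tfrac1p\|u'\|_p^p-C\|u'\|_p .
\]
Since $p>1$, $J$ is bounded below and coercive with respect to $\|u'\|_{L^p}$. Using Poincar\'e again, $\|u'\|_{L^p}$ is an equivalent norm on $\WpD$, so $J$ is coercive on the whole space.

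\textbf{Existence of a minimizer.} Take a minimizing sequence. It is bounded in the reflexive space $\WpD$, so a subsequence converges weakly in $\WpD$. By the compact embedding of \cite[Theorem~2.2]{DR}, it also converges strongly in $L^1$ and uniformly. Three facts then pass to the limit:
\begin{itemize}
\item The linear term $\int u$ converges.
\item The Dirichlet conditions are preserved, because $\WpD$ is weakly closed (it is closed and convex).
\item $u\mapsto\|u'\|_p^p$ is convex and continuous, hence weakly lower semicontinuous.
\end{itemize}
Therefore the weak limit $w_p$ minimizes $J$.

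\textbf{Euler--Lagrange equation.} For each $v\in\WpD$, the map $t\mapsto J(w_p+tv)$ is differentiable, since $\frac{d}{dt}|a+tb|^p=p|a+tb|^{p-2}(a+tb)b$ and dominated convergence justifies differentiating under the integral. Setting the derivative to zero at $t=0$ gives exactly the weak formulation~\eqref{eq:plap_duality} with $f\equiv1$.

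\textbf{From weak to strong form.} This is the step needing most care, though it is routine.
\begin{itemize}
\item \emph{Equation on each edge.} Test with $v$ supported in the interior of a single edge $e$. This shows that the flux $\phi_e:=|w_{p,e}'|^{p-2}w_{p,e}'$ has weak derivative $-1$. So $\phi_e$ is absolutely continuous, in fact affine, and \eqref{eq:pLap} holds. Since $s\mapsto|s|^{p-2}s$ is a homeomorphism of $\mathbb R$, $w_{p,e}'$ is continuous up to the endpoints, so the vertex values of $\phi_e$ make sense.
\item \emph{Kirchhoff condition.} Now test with general $v\in\WpD$ and integrate by parts on each edge. The interior terms cancel by \eqref{eq:pLap}. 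What remains is
\[
\sum_{v\in\mathcal V^N}v(v)\sum_{e\in\mathcal E_v}|w'_{p,e}(v)|^{p-2}\partial_{\nu_e}w_{p,e}(v)=0 .
\]
Choose $v$ equal to $1$ at one Kirchhoff vertex, $0$ at all other vertices, and piecewise linear on the edges. This yields \eqref{eq:Kirch} at that vertex.
\item \emph{Dirichlet condition.} \eqref{eq:Dir} holds by membership in $\WpD$.
\end{itemize}
Conversely, any solution of \eqref{eq:pLap}--\eqref{eq:Kirch} satisfies the weak formulation, by the same integration by parts.

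\textbf{Uniqueness.} Suppose $w_1,w_2$ both satisfy the weak formulation. Subtract the two identities and test with $w_1-w_2$:
\[
\int_{\mathcal G}\bigl(|w_1'|^{p-2}w_1'-|w_2'|^{p-2}w_2'\bigr)(w_1'-w_2')\,dx=0 .
\]
The map $s\mapsto|s|^{p-2}s$ is strictly monotone, so the integrand is nonnegative and vanishes only where $w_1'=w_2'$. Hence $w_1'=w_2'$ a.e. By the Poincar\'e inequality, or simply because $w_1-w_2$ is constant on the connected graph $\mathcal G$ and vanishes on $\mathcal V^D\neq\emptyset$, we get $w_1=w_2$.

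Equivalently, uniqueness follows from strict convexity of $J$: every weak solution is a critical point of the convex functional $J$, hence a minimizer, and a strictly convex functional has at most one minimizer.
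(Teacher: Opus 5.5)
Your proposal is correct and follows essentially the same route as the paper: the direct method for $J_p$ with Poincar\'e-based coercivity, reflexivity and weak lower semicontinuity, then the Euler--Lagrange identity, identification of the affine flux on each edge, and the Kirchhoff condition via edgewise integration by parts. Your primary uniqueness argument, which tests the difference of two weak formulations with $w_1-w_2$ and uses strict monotonicity of $s\mapsto|s|^{p-2}s$, is simply the derivative form of the paper's strict-convexity argument (which you also give), so nothing is missing.
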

\begin{proof}
	Consider the functional
	\begin{equation}\label{eq:Jp}
		J_p(u):=\frac1p\int_{\mathcal G}|u'|^p\,dx-\int_{\mathcal G}u\,dx,
		\qquad u\in\WpD .
	\end{equation}
	By H\"older's inequality and the Poincar\'e inequality,
	\[
	\int_{\mathcal G}u\,dx
	\le |\mathcal G|^{1/q}\|u\|_{L^p(\mathcal G)}
	\le |\mathcal G|^{1/q}C_P\|u'\|_{L^p(\mathcal G)} .
	\]
	Thus, with $C_0:=|\mathcal G|^{1/q}C_P$,
	\begin{equation}\label{eq:Jp_lower}
		J_p(u)\ge
		\frac1p\|u'\|_{L^p(\mathcal G)}^p
		-C_0\|u'\|_{L^p(\mathcal G)} .
	\end{equation}
	As $p>1$, the right-hand side is bounded from below and tends to
	$+\infty$ as $\|u'\|_{L^p(\mathcal G)}\to\infty$; together with the
	Poincar\'e inequality, this shows that $J_p$ is bounded from below and
	coercive on $\WpD$.
	
	Let $(u_n)\subset\WpD$ be a minimizing sequence. By coercivity, $(u_n)$ is
	bounded in $\WpD$. Since $\WpD$ is reflexive, after passing to a subsequence
	we may assume that
	\(
	u_n\rightharpoonup w_p
	\text{ weakly in }\WpD
	\)
	for some $w_p\in\WpD$. The functional
	$u\mapsto \frac1p\int_{\mathcal G}|u'|^p\,dx$ is convex and strongly
	continuous, hence weakly lower
	semicontinuous, while $u\mapsto\int_{\mathcal G}u\,dx$ is weakly continuous.
	Hence
	\[
	J_p(w_p)\le \liminf_{n\to\infty}J_p(u_n)=\inf_{\WpD}J_p,
	\]
	so $w_p$ is a minimizer.
	
	Uniqueness of the minimizer is a consequence of convexity: if
	$u,v\in\WpD$ are two minimizers, then the strict convexity of
	$t\mapsto |t|^p$ implies strict convexity of the first term in $J_p$
	unless $u'=v'$ a.e.\ on $\mathcal G$. Since the second term is linear,
	minimality forces $u'=v'$ a.e., and therefore $u-v$ is constant on the
	connected graph $\mathcal G$. As $\mathcal{V}^D\ne\emptyset$ and both
	functions vanish on $\mathcal{V}^D$, this constant is zero.
	
	We next identify the Euler--Lagrange equation. Since $w_p$ minimizes
	$J_p$, for every $\phi\in\WpD$ we have
	\begin{equation}\label{eq:weak_torsion}
		\int_{\mathcal G}|w_p'|^{p-2}w_p'\phi'\,dx
		=
		\int_{\mathcal G}\phi\,dx .
	\end{equation}
	Taking test functions supported in the interior of a single edge,
	\eqref{eq:weak_torsion} reduces to
	\[
	\int_e |w_{p,e}'|^{p-2}w_{p,e}'\,\phi'
	=
	\int_e\phi,
	\qquad
	\phi\in C_c^\infty(e^\circ),
	\]
	so that
	\(
	-(|w_{p,e}'|^{p-2}w_{p,e}')'=1
	\)
	holds in the distributional sense on $e$. Consequently, the flux
	\(
	A_e:=|w_{p,e}'|^{p-2}w_{p,e}'
	\)
	satisfies $-A_e'=1$, whence
	$A_e(x)=c_e-x$ for some constant $c_e$. In particular,
	$A_e$ extends continuously to the endpoints of $e$, and the boundary
	fluxes appearing at the vertices are therefore well defined.
	
	Now, integrating by parts in \eqref{eq:weak_torsion} over all edges and
	using \eqref{eq:pLap}, the interior terms cancel and one obtains
	\[
	\sum_{v\in \mathcal{V}^N}\phi(v)
	\sum_{e\in \mathcal{E}_v}
	|w_{p,e}'(v)|^{p-2}\partial_{\nu_e}w_{p,e}(v)=0 .
	\]
	Since the values $\phi(v)$, $v\in \mathcal{V}^N$, can be chosen
	independently, the $p$-Kirchhoff condition \eqref{eq:Kirch}
	follows. The Dirichlet condition is already encoded in the choice
	$w_p\in\WpD$. Hence the minimizer is a $p$-torsion function in the
	sense of Definition~\ref{def:torsionfunction}.
	
	Conversely, suppose $w\in\WpD$ satisfies
	\eqref{eq:pLap}--\eqref{eq:Kirch}. Multiplying \eqref{eq:pLap} by an
	arbitrary $\phi\in\WpD$ and integrating by parts edge by edge, the
	boundary terms at the Dirichlet vertices vanish because $\phi$ does,
	while those at the Kirchhoff vertices cancel by \eqref{eq:Kirch}; thus
	$w$ solves \eqref{eq:weak_torsion}, i.e., $w$ is a critical point of
	the convex functional $J_p$. Critical points of convex functionals are
	global minimizers, so $w$ coincides with the unique minimizer
	constructed above. This proves uniqueness within the class of
	Definition~\ref{def:torsionfunction} and completes the proof.
\end{proof}

For $p=2$, this reduces to the usual torsion problem
$-\Delta w_2=1$ with Dirichlet conditions on $\mathcal{V}^D$ and Kirchhoff conditions on
$\mathcal{V}^N$ (\cite{MuPl}).

The next result shows that the torsion function is strictly positive
away from the Dirichlet boundary. Beyond its intrinsic interest, this
property is what allows the torsion function itself to compete in the
variational problems studied below.
\begin{proposition}
	\label{prop:positivity}
	Let $\mathcal G$ be a connected compact metric graph, 
	$\mathcal{V}^D \subseteq \mathcal{V}$ be a nonempty set of degree-one vertices, and 
	$1<p<\infty$.	The $p$-torsion function is positive on
	$\mathcal G\setminus \mathcal{V}^D$.
\end{proposition}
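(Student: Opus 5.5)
We will prove the statement in two steps: first $w_p\ge0$, then strict positivity off $\mathcal V^D$.

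\emph{Nonnegativity.} We use the minimization characterization from the proof of Theorem~\ref{thm:exist}: $w_p$ is the unique minimizer of $J_p$ on $\WpD$. The function $|w_p|$ lies in $\WpD$: it is continuous, it vanishes on $\mathcal V^D$, and $|w_p|'=\operatorname{sgn}(w_p)w_p'$ a.e. on each edge. Hence $\int_{\mathcal G}\bigl||w_p|'\bigr|^p\,dx=\int_{\mathcal G}|w_p'|^p\,dx$, while $\int_{\mathcal G}|w_p|\,dx\ge\int_{\mathcal G}w_p\,dx$. This gives $J_p(|w_p|)\le J_p(w_p)$. By uniqueness of the minimizer, $|w_p|=w_p$, so $w_p\ge0$ on $\mathcal G$.

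\emph{Strict positivity.} Suppose $w_p(x_0)=0$ for some $x_0\in\mathcal G\setminus\mathcal V^D$. We treat two cases.

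If $x_0$ lies in the interior of an edge $e$, then $x_0$ is an interior minimum of the $C^1$ function $w_{p,e}$. The flux $A_e=|w_{p,e}'|^{p-2}w_{p,e}'$ is continuous with $A_e(x)=c_e-x$, and $w_{p,e}'$ has the sign of $A_e$, so $w_{p,e}'$ is continuous. At an interior minimum $w_{p,e}'(x_0)=0$, hence $A_e(x_0)=0$. Since $A_e$ is strictly decreasing, $A_e>0$ to the left of $x_0$, so $w_{p,e}$ is strictly increasing there. Because $w_{p,e}(x_0)=0$, this forces $w_{p,e}<0$ just to the left of $x_0$, contradicting $w_p\ge0$.

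If $x_0=v\in\mathcal V^N$, then every incident edge has $w_p$ attaining its minimum $0$ at the endpoint $v$. Hence each outward derivative satisfies $\partial_{\nu_e}w_{p,e}(v)\le0$. The Kirchhoff condition \eqref{eq:Kirch} states that a sum of nonpositive terms $|w_{p,e}'(v)|^{p-2}\partial_{\nu_e}w_{p,e}(v)$ vanishes, so each term is zero and $A_e(v)=0$ on every incident edge. Pick one such edge and parametrize it with $v$ at $x=0$. Then $A_e(x)=-x<0$ for $x>0$, so $w_{p,e}$ is strictly decreasing away from $v$. Starting from $w_{p,e}(0)=0$, it becomes negative, which is again a contradiction.

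The main technical point is regularity. We need $w_{p,e}$ to be $C^1$ up to the endpoints, so that minima give vanishing or signed derivatives. This follows from the explicit formula $w_{p,e}'=|A_e|^{q-2}A_e$ with $A_e$ affine. With that in hand, the Hopf-type argument above reduces to one-dimensional monotonicity of an affine flux, and no general strong maximum principle is needed.
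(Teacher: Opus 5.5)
Your proof is correct, but it takes a different and more elementary route than the paper.

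For nonnegativity, the paper tests the weak formulation with $w_p^-=\max\{-w_p,0\}$, which gives $-\int_{\{w_p<0\}}|w_p'|^p\,dx=\int_{\mathcal G}w_p^-\,dx$. You instead use $|w_p|$ as a competitor for $J_p$ and invoke uniqueness of the minimizer from Theorem~\ref{thm:exist}. The paper's version needs only the Euler--Lagrange equation, while yours relies on the strict convexity already proved; both are essentially two lines.

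For strict positivity, the paper cites V\'azquez's strong maximum principle on each edge and his boundary point lemma at a Kirchhoff vertex. You replace both citations by the explicit flux $A_e(x)=c_e-x$. Since $w_{p,e}'=|A_e|^{q-2}A_e$ is continuous up to the endpoints and strictly decreasing, $w_{p,e}$ is strictly concave on each edge, which rules out an interior zero. At a vertex, the one-sided minimum makes all outward fluxes nonpositive, and the Kirchhoff condition forces them all to vanish. Then $A_e(x)=-x$ drives $w_{p,e}$ negative into any incident edge.

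This buys you a self-contained argument, and your vertex step does not need interior positivity first. It also gets the sign right: $\partial_{\nu_e}w_{p,e}(v)\le0$ is the correct sign for the outer normal derivative produced by the integration by parts in Theorem~\ref{thm:exist}. The paper's boundary-point step records the opposite strict sign, which is harmless there because only a common strict sign is used. The paper's route is shorter on the page and does not depend on the flux being explicitly affine, a feature special to the constant load.
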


\begin{proof}
	We first show that $w_p$ is nonnegative. Let
	\(
	w_p^-:=\max\{-w_p,0\}.
	\)
	The map $t\mapsto\max\{-t,0\}$ is Lipschitz, so
	$w_p^-\in\WpD$ and
	\[
	(w_p^-)'=
	\begin{cases}
		-w_p' & \text{a.e.\ on }\{w_p<0\},\\
		0 & \text{a.e.\ on }\{w_p\ge0\}.
	\end{cases}
	\]
	Using $w_p^-$ as a test function in the weak formulation
	\eqref{eq:weak_torsion}, we obtain
	\[
	\int_{\mathcal G}|w_p'|^{p-2}w_p'(w_p^-)'dx
	=
	\int_{\mathcal G}w_p^-\,dx,
	\]
	that is,
	\[
	-\int_{\{w_p<0\}}|w_p'|^p\,dx
	=
	\int_{\mathcal G}w_p^-\,dx.
	\]
	The left-hand side is nonpositive, whereas the right-hand side is
	nonnegative; both sides must therefore vanish, and
	$w_p^-=0$ almost everywhere. Since $w_p$ is continuous on
	$\mathcal G$, we conclude that
	\(
	w_p\ge0 \text{ on }\mathcal G.
	\)
	
	To show strict positivity, we apply the strong
	maximum principle of V\'azquez \cite[Theorem~5]{Vazquez} on each edge
	separately. On every edge we have
	$\Delta_p w_p=-1\le0$ and $w_p\ge0$; moreover, $w_p$ cannot vanish
	identically on any edge, as this would contradict
	$-(|w_{p,e}'|^{p-2}w_{p,e}')'=1$. Hence $w_p$ is positive in the
	interior of every edge.
	
	Suppose now that $w_p(v_0)=0$ for some $v_0\in \mathcal{V}^N$. Viewing
	each incident edge $e\in\mathcal E_{v_0}$ as a one-dimensional domain
	with boundary point $v_0$, the boundary point lemma in
	\cite[Theorem~5]{Vazquez} yields
	\(
	\partial_{\nu_e}w_{p,e}(v_0)>0,
\forall e\in \mathcal{E}_{v_0},
	\)
which gives
	\[
	\sum_{e\in \mathcal{E}_{v_0}}
	|w_{p,e}'(v_0)|^{p-2}
	\partial_{\nu_e}w_{p,e}(v_0)
	>0,
	\]
	contradicting \eqref{eq:Kirch}.
\end{proof}
\subsection{\texorpdfstring{$p$}{p}-Torsional rigidity and its variational characterization}
Having proved the existence and positivity of the torsion function,
we can now attach to it a single number measuring the overall response
of the graph to a uniform unit load.
\begin{definition}\label{def:Tp}
	Let $\mathcal G$ be a connected compact metric graph,
	$\mathcal{V}^D\subseteq \mathcal{V}$ be a nonempty set of degree-one vertices, and 
	$1<p<\infty$. 
	The \emph{$p$-torsional rigidity} of $\mathcal G$ is defined by
	\begin{equation}
		\label{eq:torsion_def}
		T_p(\mathcal G,\mathcal{V}^D)
		:=
		\left(
		\int_{\mathcal G}w_p\,dx
		\right)^{p-1},
	\end{equation}
	where $w_p$ denotes the $p$-torsion function.
\end{definition}

The exponent $p-1$ may look arbitrary at first sight; it is chosen so
that $T_p$ obeys the same scaling law as the classical torsional
rigidity (see Lemma~\ref{lem:scaling} below, and \cite[Proposition 4.1 (6)]{MuPl}) and reduces to it for
$p=2$. The definition through the torsion function has the drawback of
being implicit; the following variational characterization removes this
drawback and will be our main working tool.

\begin{proposition}
	\label{prop:char}
	Let $\mathcal G$ be a connected compact metric graph,
	let $\mathcal V^D\subseteq\mathcal V$ be a nonempty set of degree-one
	vertices, and let $1<p<\infty$. Then the $p$-torsional rigidity admits
	the variational characterization
	\begin{equation}
		\label{eq:varchar}
		T_p(\mathcal G,\mathcal V^D)
		=
		\sup_{\substack{u\in\WpD\\u\not\equiv0}}
		\frac{\left(\int_{\mathcal G}|u|\,dx\right)^p}
		{\int_{\mathcal G}|u'|^p\,dx}.
	\end{equation}
	Moreover, a function $u\in\WpD\setminus\{0\}$ attains the supremum
	in~\eqref{eq:varchar} if and only if it is a nonzero scalar multiple
	of the $p$-torsion function $w_p$.
\end{proposition}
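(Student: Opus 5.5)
The plan is to pass through the weak formulation \eqref{eq:weak_torsion} of the torsion problem, combined with H\"older's inequality. First I compute the value of the quotient at $w_p$. Testing \eqref{eq:weak_torsion} with $\phi=w_p$ gives
\[
\int_{\mathcal G}|w_p'|^p\,dx=\int_{\mathcal G}w_p\,dx=:M .
\]
By Proposition~\ref{prop:positivity}, $w_p\ge0$, so $\int|w_p|=M>0$. The quotient at $w_p$ is therefore $M^p/M=M^{p-1}=T_p(\mathcal G,\mathcal V^D)$. This shows that the supremum is at least $T_p$.

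For the reverse inequality, take any $u\in\WpD\setminus\{0\}$. Since $|u|\in\WpD$ with $\bigl||u|'\bigr|=|u'|$ a.e., I may test \eqref{eq:weak_torsion} with $\phi=|u|$. Applying H\"older with exponents $q$ and $p$ then gives
\[
\int_{\mathcal G}|u|\,dx=\int_{\mathcal G}|w_p'|^{p-2}w_p'\,|u|'\,dx\le\Bigl(\int_{\mathcal G}|w_p'|^{p}\Bigr)^{1/q}\Bigl(\int_{\mathcal G}|u'|^p\Bigr)^{1/p}=M^{(p-1)/p}\|u'\|_{L^p}.
\]
Raising both sides to the power $p$ yields $(\int|u|)^p\le M^{p-1}\int|u'|^p$. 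Note that $\int|u'|^p>0$ by the Poincar\'e inequality, so the quotient is well defined. This proves \eqref{eq:varchar}, and the supremum is a maximum attained at $w_p$.

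The characterization of maximizers is the delicate step. Scalar multiples $cw_p$ are maximizers by homogeneity. Conversely, suppose $u$ attains equality. Then both inequalities above must be equalities.

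\emph{Equality in H\"older.} This forces $|u'|^p=\lambda|w_p'|^p$ a.e. for some $\lambda\ge0$, and also $w_p'\,|u|'\ge0$ a.e. Together these give $|u|'=\lambda^{1/p}w_p'$ a.e. Hence $|u|-\lambda^{1/p}w_p$ is constant on the connected graph, and since both vanish on $\mathcal V^D$, we get $|u|=\lambda^{1/p}w_p$.

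\emph{Removing the absolute value.} Since $w_p>0$ on $\mathcal G\setminus\mathcal V^D$ by Proposition~\ref{prop:positivity}, $u$ never vanishes there. The set $\mathcal G\setminus\mathcal V^D$ is connected, because removing degree-one vertices does not disconnect the graph. By continuity $u$ therefore has constant sign, so $u=\pm\lambda^{1/p}w_p$.

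An alternative route for the equality case avoids the pointwise H\"older analysis. If $u\ge0$ is a maximizer, then the rescaled $tu$ with the optimal $t$ minimizes $J_p$, since $\min_t J_p(tu)$ depends monotonically on the quotient. Uniqueness from Theorem~\ref{thm:exist} then gives $tu=w_p$. One reduces to $u\ge0$ by replacing $u$ with $|u|$, which leaves the quotient unchanged, and then applies the sign argument above. I would present the H\"older-equality version and keep the $J_p$ argument as a check.
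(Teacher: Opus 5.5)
Your proposal is correct and follows essentially the same route as the paper: testing the weak formulation with $w_p$ gives the energy identity and hence the lower bound, testing with $|u|$ and applying H\"older's inequality gives the upper bound, and the H\"older equality case identifies the maximizers. Your step from $|u|=\lambda^{1/p}w_p$ to $u=\pm\lambda^{1/p}w_p$ is worth keeping, since the paper skips it: it uses positivity of $w_p$ on the connected set $\mathcal G\setminus\mathcal V^D$, whereas the paper tacitly treats the maximizer as nonnegative and concludes $u=cw_p$ with $c>0$.
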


\begin{proof}
	Taking $\phi=w_p$ as a test function in the weak formulation
	\eqref{eq:weak_torsion}, we obtain the energy identity
	\begin{equation}
		\label{eq:torsion_energy}
		\int_{\mathcal G}|w_p'|^p\,dx
		=
		\int_{\mathcal G}w_p\,dx.
	\end{equation}
	In particular,
	\[
	\frac{\left(\int_{\mathcal G}w_p\,dx\right)^p}
	{\int_{\mathcal G}|w_p'|^p\,dx}
	=
	\left(\int_{\mathcal G}w_p\,dx\right)^{p-1}
	=
	T_p(\mathcal G,\mathcal V^D),
	\]
	so the supremum in~\eqref{eq:varchar} is at least
	$T_p(\mathcal G,\mathcal V^D)$.
	Conversely, let $u\in\WpD\setminus\{0\}$. Since
	$|u|\in\WpD$ and
	\(
	\bigl|(|u|)'\bigr|=|u'|
	\text{ a.e. on }\mathcal G,
	\)
	the variational quotient is unchanged when $u$ is replaced by $|u|$.
	We may therefore assume that $u\geq0$. Using $u$ as a test function in
	\eqref{eq:weak_torsion} gives
	\[
	\int_{\mathcal G}u\,dx
	=
	\int_{\mathcal G}|w_p'|^{p-2}w_p'u'\,dx.
	\]
	By H\"older's inequality,
	\[
	\int_{\mathcal G}u\,dx
	\leq
	\left(
	\int_{\mathcal G}
	\bigl||w_p'|^{p-2}w_p'\bigr|^q\,dx
	\right)^{1/q}
	\left(
	\int_{\mathcal G}|u'|^p\,dx
	\right)^{1/p}.
	\]
	Since $(p-1)q=p$, this becomes
	\[
	\int_{\mathcal G}u\,dx
	\leq
	\left(
	\int_{\mathcal G}|w_p'|^p\,dx
	\right)^{(p-1)/p}
	\left(
	\int_{\mathcal G}|u'|^p\,dx
	\right)^{1/p}.
	\]
	Raising both sides to the power $p$, dividing by
	$\int_{\mathcal G}|u'|^p\,dx$, and using
	\eqref{eq:torsion_energy}, we find
	\[
	\frac{\left(\int_{\mathcal G}u\,dx\right)^p}
	{\int_{\mathcal G}|u'|^p\,dx}
	\leq
	\left(
	\int_{\mathcal G}|w_p'|^p\,dx
	\right)^{p-1}
	=
	\left(
	\int_{\mathcal G}w_p\,dx
	\right)^{p-1}
	=
	T_p(\mathcal G,\mathcal V^D).
	\]
	Taking the supremum over all nonzero $u\in\WpD$ proves
	\eqref{eq:varchar}.
	
	It remains to characterize the maximizers. Since the quotient in
	\eqref{eq:varchar} is homogeneous of degree zero, every nonzero scalar
	multiple of $w_p$ attains the supremum.
Conversely, let $u$ be a maximizer. Then equality holds in Hölder's
inequality, and hence
\[
u'
=
c\,|w_p'|^{(p-1)(q-2)+p-2}w_p'
=
cw_p'
\]
for some constant $c>0$, since $(p-1)(q-2)+p-2=0$.
Hence $u-cw_p$ is constant on $\mathcal G$. As both functions vanish on
$\mathcal V^D$, this constant is zero, and therefore
\(
u=cw_p.
\)
This proves that every maximizer is a scalar multiple of the
$p$-torsion function.
\end{proof}

Formula \eqref{eq:varchar} is the metric graph analogue of the classical
variational characterization of torsional rigidity due to
P\'olya~\cite{Polya}; its nonlinear counterpart on Euclidean domains was
established by Brasco~\cite{Brasco}. For $p=2$, it reduces to
\cite[Eq.~(2.4)]{MuPl}.

\subsection{Basic properties of the \texorpdfstring{$p$}{p}-torsional rigidity}

The variational characterization established in Proposition~\ref{prop:char}
immediately yields several structural properties of the $p$-torsional
rigidity. We begin with its scaling behavior, which in particular
justifies the exponent chosen in Definition~\ref{def:Tp}.

\begin{lemma}
	\label{lem:scaling}
	Let $\mathcal G$ be a connected compact metric graph, let
	$\mathcal{V}^D\subseteq \mathcal{V}$ be a nonempty set of degree-one vertices, and
	$1<p<\infty$.	Let $c\mathcal G$ denote the metric graph obtained from $\mathcal G$ by
	multiplying every edge length by a factor $c>0$, while keeping the same
	Dirichlet set $\mathcal{V}^D$. Then
	\[
	T_p(c\mathcal G,\mathcal{V}^D)=c^{2p-1}	T_p(\mathcal G,\mathcal{V}^D).
	\]
\end{lemma}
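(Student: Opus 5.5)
The plan is to use the variational characterization of Proposition~\ref{prop:char} together with the natural change of variables between $\mathcal G$ and $c\mathcal G$. Let $\Phi_c:\mathcal G\to c\mathcal G$ be the dilation, which maps each edge $[0,\ell_e]$ onto $[0,c\ell_e]$ by $x\mapsto cx$ and fixes the combinatorial structure, in particular the Dirichlet set $\mathcal V^D$. For $u\in\WpD$ (on $\mathcal G$), define $u_c:=u\circ\Phi_c^{-1}$ on $c\mathcal G$, that is, $u_c(y)=u(y/c)$ edgewise. The first step is to check that $u\mapsto u_c$ is a bijection between $W^{1,p}_{\mathcal V^D}(\mathcal G)$ and $W^{1,p}_{\mathcal V^D}(c\mathcal G)$. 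Continuity at vertices and the vanishing on $\mathcal V^D$ are preserved because $\Phi_c$ maps vertices to the corresponding vertices. The edgewise $W^{1,p}$ regularity is preserved under affine reparametrization, and the inverse map is $u_c\mapsto u_c\circ\Phi_c$.

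The second step is the computation of the two integrals in the Rayleigh-type quotient. Substituting $y=cx$ on each edge gives
\[
\int_{c\mathcal G}|u_c|\,dy=c\int_{\mathcal G}|u|\,dx .
\]
Since $u_c'(y)=c^{-1}u'(y/c)$, we also get
\[
\int_{c\mathcal G}|u_c'|^p\,dy=c^{1-p}\int_{\mathcal G}|u'|^p\,dx .
\]
Hence the quotient in \eqref{eq:varchar} for $u_c$ on $c\mathcal G$ equals
\[
\frac{c^p\left(\int_{\mathcal G}|u|\right)^p}{c^{1-p}\int_{\mathcal G}|u'|^p}=c^{2p-1}\,\frac{\left(\int_{\mathcal G}|u|\right)^p}{\int_{\mathcal G}|u'|^p}.
\]

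The final step takes the supremum over all nonzero $u$. Because the correspondence $u\mapsto u_c$ is bijective and preserves nonvanishing, the supremum over $W^{1,p}_{\mathcal V^D}(c\mathcal G)\setminus\{0\}$ equals $c^{2p-1}$ times the supremum over $\WpD\setminus\{0\}$. By Proposition~\ref{prop:char} this is exactly $T_p(c\mathcal G,\mathcal V^D)=c^{2p-1}T_p(\mathcal G,\mathcal V^D)$.

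There is no real obstacle here; the only point needing care is the bijectivity of the function-space correspondence, including the vertex conditions. As a consistency check, one can verify the result directly on the torsion function. The function $w_c(y):=c^{q}\,w_p(y/c)$, with $q=p/(p-1)$, satisfies $-\Delta_p w_c=1$ with the Dirichlet and $p$-Kirchhoff conditions, since the flux scales by $c^{(q-1)(p-1)}=c$ and its derivative by $c^0$. This gives
\[
\int_{c\mathcal G}w_c=c^{q+1}\int_{\mathcal G}w_p ,
\]
and raising to the power $p-1$ yields $c^{p+p-1}=c^{2p-1}$, in agreement with the result above.
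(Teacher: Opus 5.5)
Your proposal is correct and follows essentially the same route as the paper: the dilation $u_c(x)=u(x/c)$ as a bijection between the energy spaces, the scalings $c$ and $c^{1-p}$ of the two integrals, and then the supremum in Proposition~\ref{prop:char}. Your additional check via the rescaled torsion function $c^{q}w_p(\cdot/c)$ is also correct, and gives an independent confirmation directly from Definition~\ref{def:Tp}.
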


\begin{proof}
	For $u\in\WpD$, define $u_c$ on $c\mathcal G$ by
	\(
	u_c(x):=u(x/c);
	\)
	the map $u\mapsto u_c$ is a bijection of $\WpD$ onto
	$W^{1,p}_{\mathcal V^D}(c\mathcal G)$.
	A change of variables gives
	\[
	\int_{c\mathcal G}|u_c|\,dx
	=
	c\int_{\mathcal G}|u|\,dx,
	\qquad
	\int_{c\mathcal G}|u_c'|^p\,dx
	=
	c^{1-p}\int_{\mathcal G}|u'|^p\,dx,
	\]
	and therefore
	\[
	\frac{\left(\int_{c\mathcal G}|u_c|\,dx\right)^p}
	{\int_{c\mathcal G}|u_c'|^p\,dx}
	=
	c^{2p-1}
	\frac{\left(\int_{\mathcal G}|u|\,dx\right)^p}
	{\int_{\mathcal G}|u'|^p\,dx}.
	\]
	Taking the supremum  and using
	Proposition~\ref{prop:char} give the result.
\end{proof}

Our second result describes how the torsional rigidity depends on the
exponent $p$ itself: once suitably normalized by the total length, it
can only decrease as $p$ grows.

\begin{proposition}
	\label{prop:mono}
	Let $\mathcal G$ be a connected compact metric graph with  $|\mathcal G|=L$,
	$\mathcal{V}^D\subseteq \mathcal{V}$ be a nonempty set of degree-one vertices, and 
	$1<r<p<\infty$. Then
	\[
	T_p(\mathcal G,\mathcal{V}^D)^{1/p}
	\le
	L^{1/r-1/p}
	T_r(\mathcal G,\mathcal{V}^D)^{1/r}.
	\]
	Equivalently, the map
	\(
	p\mapsto
	L^{1/p}	T_p(\mathcal G,\mathcal{V}^D)^{1/p}
	\)
	is non-increasing on $(1,\infty)$.
\end{proposition}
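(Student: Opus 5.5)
Both sides are controlled through the variational characterization of Proposition~\ref{prop:char}. Fix $u\in\WpD\setminus\{0\}$. Since $r<p$, $u$ also lies in $W^{1,r}_{\mathcal V^D}(\mathcal G)$: the graph is compact, so $L^p\subset L^r$ edgewise. We can therefore use $u$ as a competitor for $T_r$. The link between the two Dirichlet energies is Hölder's inequality with exponents $p/r$ and its conjugate:
\[
\int_{\mathcal G}|u'|^r\,dx\le L^{1-r/p}\Bigl(\int_{\mathcal G}|u'|^p\,dx\Bigr)^{r/p},
\qquad\text{that is,}\qquad
\|u'\|_{L^r(\mathcal G)}\le L^{1/r-1/p}\|u'\|_{L^p(\mathcal G)}.
\]

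Next, write the quotient in \eqref{eq:varchar} in root form. For exponent $s$ it reads
\[
\Bigl(\frac{(\int|u|)^s}{\int|u'|^s}\Bigr)^{1/s}=\frac{\|u\|_{L^1}}{\|u'\|_{L^s}}.
\]
The Hölder bound then gives
\[
\frac{\|u\|_{L^1}}{\|u'\|_{L^p}}\le L^{1/r-1/p}\frac{\|u\|_{L^1}}{\|u'\|_{L^r}}\le L^{1/r-1/p}\,T_r(\mathcal G,\mathcal V^D)^{1/r},
\]
where the last step is Proposition~\ref{prop:char} applied with exponent $r$. Taking the supremum over $u$ and applying Proposition~\ref{prop:char} with exponent $p$ yields $T_p^{1/p}\le L^{1/r-1/p}T_r^{1/r}$.

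For the equivalent formulation, multiply both sides by $L^{1/p}$. This gives $L^{1/p}T_p^{1/p}\le L^{1/r}T_r^{1/r}$ for every $r<p$, which is exactly the claimed monotonicity of $p\mapsto L^{1/p}T_p^{1/p}$.

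There is no real obstacle here. The only points to check are that the supremum for $T_r$ may legitimately be tested with functions in $W^{1,p}$, which holds because $W^{1,p}_{\mathcal V^D}\subset W^{1,r}_{\mathcal V^D}$, and that the exponents are bookkept correctly when passing to $1/s$-th roots.
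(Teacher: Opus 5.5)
Your proposal is correct and follows essentially the same route as the paper: you apply H\"older's inequality $\|u'\|_{L^r(\mathcal G)}\le L^{1/r-1/p}\|u'\|_{L^p(\mathcal G)}$ inside the P\'olya quotient, then use Proposition~\ref{prop:char} for both exponents $r$ and $p$. Your explicit remark that $W^{1,p}_{\mathcal V^D}(\mathcal G)\subset W^{1,r}_{\mathcal V^D}(\mathcal G)$ justifies the step the paper leaves implicit when it bounds the $r$-quotient of a $W^{1,p}$ function by $T_r$.
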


\begin{proof}
	Let $u\in\WpD$ with $u\not\equiv0$. By H\"older's inequality,
	\(
	\|u'\|_{L^r(\mathcal G)}
	\le
	L^{1/r-1/p}\,
	\|u'\|_{L^p(\mathcal G)},
	\)
	or equivalently,
	\[
	\int_{\mathcal G}|u'|^p\,dx
	\ge
	L^{1-p/r}\,
	\|u'\|_{L^r(\mathcal G)}^p.
	\]
	Substituting this estimate into the denominator of the variational quotient
	gives
	\[
	\frac{\left(\int_{\mathcal G}|u|\,dx\right)^p}
	{\int_{\mathcal G}|u'|^p\,dx}
	\le
	L^{p/r-1}
	\left(
	\frac{\int_{\mathcal G}|u|\,dx}
	{\|u'\|_{L^r(\mathcal G)}}
	\right)^p.
	\]
	Taking the supremum over $u\in\WpD$ and applying
	Proposition~\ref{prop:char} for both the exponent $p$ and
 $r$  yields
	\(
L^{1/p}	T_p(\mathcal G,\mathcal{V}^D)^{1/p}
	\le
	L^{1/r}
	T_r(\mathcal G,\mathcal{V}^D)^{1/r}.
	\)
\end{proof}
\section{The Saint-Venant inequality}
\label{sec:saintvenant}

The main result of this section is a Saint-Venant type inequality for
the $p$-torsional rigidity: among all compact metric graphs of given
total length, the torsional rigidity is largest when the graph is an interval. The guiding idea is the following. Starting from the
torsion function of an arbitrary graph, we produce, by monotone
rearrangement, a competitor on an interval of the same total length,
and we show that this passage preserves the total mass while it can
only decrease the $p$-Dirichlet energy, the loss of energy being
caused precisely by branching and by cycles. Since the extremal
configuration is an interval with a single Dirichlet endpoint, we begin
by computing its torsional rigidity explicitly.


Let $J=[0,L]$ denote the interval of length $L$.

\begin{example}
	\label{ex:J0}
		\begin{figure}\label{interval graph}
		\centering
		\begin{tikzpicture}
			\node[draw, circle, inner sep=1pt, fill, label=above:0] (A) at (0, 0) {};
			\node[draw, circle, inner sep=1pt, fill, label=above:$L$] (B) at (4, 0) {};
			\draw[ ] (A) -- (B);
		\end{tikzpicture}
		\caption{The interval graph of length $L$}
	\end{figure}
	Let $\mathcal{V}^D=\{L\}$, so that $J$ is equipped with a Neumann condition at $0$ and a
	Dirichlet condition at $L$. 
	Let $w_p$ denote the torsion function on $J$. Introducing the $p$-flux
	\(
	A:=|w_p'|^{p-2}w_p',
	\)
	the torsion equation becomes
	\(
	-A'=1.
	\)
	Since the Neumann condition is equivalent to $A(0)=0$, we obtain
	\(
	A(x)=-x.
	\)
	Consequently,
	\(
	w_p'(x)=-x^{q-1},
	\)
	and integrating together with the Dirichlet condition $w_p(L)=0$ yields
	\(
	w_p(x)
	=
	\frac{L^q-x^q}{q}.
	\)
Then the torsional rigidity is found as
	\begin{equation}
		\label{eq:J0}
		T_p(J,\{L\})
		= 	\int_0^Lw_p(x)\,dx
		=
		\frac{L^{q+1}}{q+1}.
	\end{equation}

\end{example}

We can now state the main result of this section. The proof adapts the
rearrangement argument of \cite[Section~5.1]{AST17} to the nonlinear
setting $1<p<\infty$.

\begin{theorem}
	\label{thm:SV}
	Let $\mathcal G$ be a connected compact metric graph with $|\mathcal G|=L$,
	let $\mathcal{V}^D\subseteq \mathcal{V}$ be a nonempty set of degree-one
	vertices, and let $1<p<\infty$. Then
	\begin{equation}
		\label{eq:SV}
		T_p(\mathcal G,\mathcal{V}^D)
		\le
		\Bigl(\frac{L^{q+1}}{q+1}\Bigr)^{p-1}
		=
		T_p(J,\{L\}),
	\end{equation}
	with equality if and only if
	$\mathcal G$ is an interval of length $L$ with a single Dirichlet endpoint.
\end{theorem}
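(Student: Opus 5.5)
We argue by monotone rearrangement of the torsion function onto the interval $J=[0,L]$, using the variational characterization of Proposition~\ref{prop:char}. Let $w:=w_p\ge0$ be the $p$-torsion function of $(\mathcal G,\mathcal V^D)$ (Proposition~\ref{prop:positivity}), let $M:=\max_{\mathcal G}w$, and let $\mu(t):=|\{x\in\mathcal G: w(x)>t\}|$ be its distribution function. The decreasing rearrangement $w^*:[0,L]\to[0,M]$ is defined by $w^*(s):=\inf\{t:\mu(t)\le s\}$. It is non-increasing, satisfies $w^*(L)=0$ (since $w$ vanishes at a Dirichlet vertex, so $\mu(0)\le L$ and $\mu(t)\to L$ only as $t\downarrow 0$), and is equimeasurable with $w$, so that $\int_0^L w^*\,ds=\int_{\mathcal G}w\,dx$. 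Because $w$ solves $-(|w'|^{p-2}w')'=1$ on each edge, the flux is affine; hence $w'$ vanishes only at finitely many points. Consequently $\mu$ is absolutely continuous and strictly decreasing on $[0,M]$, and $w^*$ is Lipschitz with $w^*\in W^{1,p}_{\{L\}}(J)$.

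The key step is the Pólya--Szegő type inequality $\int_0^L|(w^*)'|^p\,ds\le\int_{\mathcal G}|w'|^p\,dx$. By the coarea formula, for a.e.\ $t$ we have $-\mu'(t)=\sum_{x\in w^{-1}(t)}|w'(x)|^{-1}$ and $\int_{\mathcal G}|w'|^p\,dx=\int_0^M\sum_{x\in w^{-1}(t)}|w'(x)|^{p-1}\,dt$, while on the interval $\int_0^L|(w^*)'|^p=\int_0^M(-\mu'(t))^{1-p}\,dt$. Write $n(t):=\#w^{-1}(t)$. For a.e.\ $t\in(0,M)$ we have $n(t)\ge1$. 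Jensen's (or Hölder's) inequality then gives
\[
\sum_{x\in w^{-1}(t)}|w'(x)|^{p-1}\;\ge\; n(t)^{p}\Bigl(\sum_{x\in w^{-1}(t)}|w'(x)|^{-1}\Bigr)^{1-p}\;\ge\;(-\mu'(t))^{1-p}.
\]
Integrating in $t$ yields the claimed inequality. Combining it with Proposition~\ref{prop:char} on $\mathcal G$ and on $J$ gives
\[
T_p(\mathcal G,\mathcal V^D)=\frac{(\int_{\mathcal G}w)^p}{\int_{\mathcal G}|w'|^p}\le\frac{(\int_0^L w^*)^p}{\int_0^L|(w^*)'|^p}\le T_p(J,\{L\}),
\]
and Example~\ref{ex:J0} evaluates the right-hand side.

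For the equality case, suppose equality holds throughout. By Proposition~\ref{prop:char}, $w^*$ is a multiple of the torsion function of $J$; moreover $\int|(w^*)'|^p=\int|w'|^p$, which forces $n(t)^p=1$ for a.e.\ $t$, i.e.\ $n(t)=1$ for a.e.\ $t\in(0,M)$. We then conclude topologically. If $\mathcal G$ had a vertex of degree at least $3$ or a cycle, or more than one Dirichlet vertex, then, since $w$ is continuous, positive in the interior and vanishing on $\mathcal V^D$, some level set would contain at least two points for a set of $t$ of positive measure. Indeed, near a branching vertex $v$ with $w(v)=t_0<M$, at least two branches carry values $t$ slightly above or below $t_0$; and two distinct Dirichlet vertices give two preimages for all small $t$. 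Hence $\mathcal G$ is a path with exactly one Dirichlet endpoint, that is, an interval. The converse direction is Example~\ref{ex:J0}.

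The main obstacle is the rigor of the coarea step: we must justify absolute continuity of $\mu$ and the identity for $\mu'$ when $w$ has critical points, and show that $n(t)\ge1$ for a.e.\ $t$ while handling the possibility that $\mu$ has jumps from plateaus. Both issues are settled by the explicit affine-flux structure of $w$ on each edge, which rules out plateaus and leaves only finitely many critical points.
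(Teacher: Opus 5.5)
Your proposal is correct and follows essentially the same route as the paper: decreasing rearrangement of $w_p$ onto $[0,L]$, the coarea identities for $\int_{\mathcal G}|w_p'|^p$ and $-\mu'$, H\"older on each level set combined with $n(t)\ge1$, and Proposition~\ref{prop:char} on both $\mathcal G$ and $J$, with equality forcing $n(t)=1$ for a.e.\ $t$. The only difference is in the topological step. You make it explicit (pigeonhole on the strictly monotone branches at a vertex of degree at least three, and two preimages near distinct Dirichlet vertices), whereas the paper cites \cite[Lemma~4.3]{BKKM17}; you should add one line noting that cycles need no separate argument, since a connected graph with a degree-one vertex and no vertex of degree $\ge3$ (counting a loop twice) is already a path.
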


\begin{proof}
	Let $w_p\in\WpD$ be the $p$-torsion function on
	$(\mathcal G,\mathcal{V}^D)$. By Proposition~\ref{prop:positivity},
	$w_p\ge0$ on $\mathcal G$ and $w_p\not\equiv0$; $w_p$ is edgewise $C^1$ and piecewise
	strictly monotone, with at most one critical point per edge. In
	particular, the set consisting of the critical values of $w_p$ and of
	its values at the vertices is finite. 
 For  $t\in(0,\|w_p\|_{L^{\infty}(\mathcal{G})})$,	we denote by
	\[
	m(t):=|\{w_p>t\}|
	\]
	the distribution function of $w_p$, and by $N(t)$ the number of elements
	in the level set $\{w_p=t\}$. Observe first that for every
	$t\in(0,\|w_p\|_{L^{\infty}(\mathcal{G})})$ the superlevel set $\{w_p>t\}$ is a nonempty,
	open, proper subset of the connected graph $\mathcal G$; its boundary is
	therefore nonempty and contained in $\{w_p=t\}$, whence
	\begin{equation}
		\label{eq:Nge1}
		N(t)\ge1
		\qquad
		\text{for every }t\in(0,\|w_p\|_{L^{\infty}(\mathcal{G})}).
	\end{equation}
	The first step is a lower bound for the Dirichlet energy in terms of
	the distribution function. For every regular level $t$, the level set
	$w_p^{-1}(t)$ consists of finitely many points
	\(
	x_1(t),\ldots,x_{N(t)}(t),
	\)
	all lying in the interior of edges and satisfying
	$w_p'(x_j(t))\neq0$: indeed, on each of the finitely many maximal
	monotonicity intervals of $w_p$, the equation $w_p=t$ has at most one
	solution. Introducing
	\[
	S(t):=
	\sum_{j=1}^{N(t)}
	|w_p'(x_j(t))|^{p-1},
	\]
	the coarea formula on metric graphs  (see~\cite[p.~22]{AST17})
	yields
	\begin{equation}
		\label{eq:coarea_Lp}
		\int_{\mathcal G}|w_p'|^p\,dx
		=
		\int_0^{\|w_p\|_{L^{\infty}(\mathcal{G})}}
		S(t)\,dt.
	\end{equation}
	The same piecewise monotone structure shows that $m$ is absolutely
	continuous and strictly decreasing on $(0,\|w_p\|_{L^{\infty}(\mathcal{G})})$, with
	\begin{equation}
		\label{eq:mdiff}
		-m'(t)
		=
		\sum_{j=1}^{N(t)}
		\frac1{|w_p'(x_j(t))|}
		\qquad
		\text{for a.e. }t.
	\end{equation}
	Applying H\"older's inequality with conjugate exponents
	$p$ and $p/(p-1)$ gives
	\[
	N(t)
	=
	\sum_{j=1}^{N(t)}
	|w_p'(x_j(t))|^{\frac{p-1}{p}}
	|w_p'(x_j(t))|^{-\frac{p-1}{p}}
	\le
	S(t)^{1/p}
	(-m'(t))^{(p-1)/p},
	\]
	or equivalently,
	\begin{equation}
		\label{eq:Holder_level}
		S(t)
		\ge
		\frac{N(t)^p}
		{(-m'(t))^{p-1}}.
	\end{equation}
	Combining \eqref{eq:coarea_Lp}, \eqref{eq:Holder_level}, and
	\eqref{eq:Nge1}, we obtain
	\begin{equation}
		\label{eq:coarea}
		\int_{\mathcal G}|w_p'|^p\,dx
		\ge
		\int_0^{\|w_p\|_{L^{\infty}(\mathcal{G})}}
		\frac{N(t)^p}
		{(-m'(t))^{p-1}}
		\,dt
		\ge
		\int_0^{\|w_p\|_{L^{\infty}(\mathcal{G})}}
		(-m'(t))^{1-p}\,dt.
	\end{equation}
	The energy estimate obtained above naturally leads to the decreasing
	rearrangement of $w_p$. Let
	\(
	w_p^*:[0,L]\longrightarrow[0,\infty)
	\)
	be the decreasing rearrangement of $w_p$, defined by
	\[
	w_p^*(s)
	:=
	\inf\{t\ge0:\,m(t)<s\}.
	\]
	Since $m$ is continuous and strictly decreasing, with $m(t)\to L$ as
	$t\to0$ (recall that $w_p>0$ outside the finite set
	$\mathcal V^D$) and $m(t)\to0$ as $t\to\|w_p\|_{L^{\infty}(\mathcal{G})}$, the
	rearrangement $w_p^*$ is nothing but the inverse function of $m$,
	extended by $w_p^*(0)=\|w_p\|_{L^{\infty}(\mathcal{G})}$. In particular, $w_p$ and
	$w_p^*$ are equimeasurable, so that
	\[
	\int_0^L w_p^*(s)\,ds
	=
	\int_{\mathcal G}w_p(x)\,dx.
	\]
	Let us now compare the Dirichlet energies of $w_p$ and $w_p^*$. Since
	\(
	w_p^*(m(t))=t
\text{ for every }t\in(0,\|w_p\|_{L^{\infty}(\mathcal{G})}),
	\)
	differentiating this identity at almost every $t$ yields
	\(
	(w_p^*)'(m(t))\,m'(t)=1,
\text{ hence }
	|(w_p^*)'(m(t))|^p
	=
	(-m'(t))^{-p},
	\)
	and the change of variables $s=m(t)$  gives
	\begin{equation}
		\label{eq:rear_id}
		\int_0^L |(w_p^*)'(s)|^p\,ds
		=
		\int_0^{\|w_p\|_{L^{\infty}(\mathcal{G})}}
		(-m'(t))^{1-p}\,dt.
	\end{equation}
	Combining \eqref{eq:coarea} with \eqref{eq:rear_id}, we arrive at the
	rearrangement inequality
	\begin{equation}
		\label{eq:rear_ineq}
		\int_{\mathcal G}|w_p'|^p\,dx
		\ge
		\int_0^L |(w_p^*)'(s)|^p\,ds:
	\end{equation}
	passing to the rearrangement can only decrease the $p$-Dirichlet
	energy.
	
	To apply the variational characterization of the torsional rigidity on
	the interval, it remains to verify that the rearrangement satisfies
	the appropriate boundary condition. Let $v_0\in\mathcal{V}^D$. Since
	$w_p(v_0)=0$ and $w_p$ is continuous on $\mathcal G$, every
	neighborhood of $v_0$ contains points where $w_p<t$ for each $t>0$.
	Consequently,
	\(
	m(t)<L
\text{ for every }t>0,
	\)
	and therefore
	\[
	w_p^*(L)
	=
	\inf\{t\ge0:\,m(t)<L\}
	=
	0.
	\]
	Thus $w_p^*\in W^{1,p}_{\{L\}}(J)$ .
 Combining the equimeasurability
	identity with \eqref{eq:rear_ineq}, we obtain
	\[
	\frac{\left(\int_{\mathcal G}w_p\,dx\right)^p}
	{\int_{\mathcal G}|w_p'|^p\,dx}
	\le
	\frac{\left(\int_0^L w_p^*\,ds\right)^p}
	{\int_0^L |(w_p^*)'|^p\,ds},
	\]
	and Proposition~\ref{prop:char}, applied on $J$ and $\mathcal{G}$ gives
	\[
	\frac{\left(\int_0^L w_p^*\,ds\right)^p}
	{\int_0^L |(w_p^*)'|^p\,ds}
	\le
	T_p(J,\{L\}),
	\]
	and therefore
	\(
	T_p(\mathcal G,\mathcal{V}^D)
	\le
	T_p(J,\{L\}),
	\)
	which proves \eqref{eq:SV}.
	
	We now discuss the equality case. Suppose first that equality holds in
	\eqref{eq:SV}. Then equality must hold throughout the preceding chain
	of inequalities; in particular, equality holds in \eqref{eq:coarea}.
	The only loss in passing from
	\[
	\int_0^{\|w_p\|_{L^{\infty}(\mathcal{G})}}
	\frac{N(t)^p}{(-m'(t))^{p-1}}\,dt
	\qquad\text{to}\qquad
	\int_0^{\|w_p\|_{L^{\infty}(\mathcal{G})}}
	(-m'(t))^{1-p}\,dt
	\]
	comes from the estimate $N(t)\ge1$, so equality forces
	\(
	N(t)=1
 \)  for almost every regular level   \(t\in(0,\|w_p\|_{L^{\infty}(\mathcal{G})}).
	\)
	Thus almost every nonempty superlevel set of $w_p$ has a single
	boundary point. This is a strong topological constraint: the same
	argument as in the proof of \cite[Lemma~4.3]{BKKM17} shows that it
	excludes both branching vertices and cycles. Hence $\mathcal G$ is necessarily a path graph.
	
	It remains to determine the Dirichlet set. Since $w_p$ is positive on
	$\mathcal G\setminus\mathcal V^D$, every sufficiently small positive level
	set of $w_p$ has one boundary point near each Dirichlet endpoint.
	Hence, if $\mathcal G$ had two Dirichlet endpoints, then
	$N(t)\ge2$ for all sufficiently small $t>0$, contradicting the equality
	condition $N(t)=1$ for almost every regular level. Therefore
	$\mathcal G$ has exactly one Dirichlet endpoint. Since
	$|\mathcal G|=L$, it follows that, up to subdivision of vertices of
	degree two, $\mathcal G$ is an interval of length $L$ with a single
	Dirichlet endpoint.
	Conversely, if $\mathcal G$ is an interval of length $L$ with a single
	Dirichlet endpoint, then Example~\ref{ex:J0} gives
	\[
	T_p(\mathcal G,\mathcal{V}^D)
	=
	T_p(J,\{L\})
	=
	\Bigl(\frac{L^{q+1}}{q+1}\Bigr)^{p-1}.
	\]
	Equality is therefore attained precisely by the Dirichlet--Neumann
	interval, and the proof is complete.
\end{proof}
	\section{Spectral estimates}
	\label{sec:spectral}
	
	This section relates the $p$-torsional rigidity to the first eigenvalue of
	the Dirichlet--Kirchhoff $p$-Laplacian, which is given by the Rayleigh
	quotient
	\begin{equation}
		\label{eq:lambda1p}
		\lambda_{1,p}(\mathcal G,\mathcal{V}^D)
		:=
		\inf_{\substack{u\in W^{1,p}_{\mathcal{V}^D}(\mathcal G)\\ u\neq 0}}
		\frac{\int_{\mathcal G}|u'|^p\,dx}
		{\int_{\mathcal G}|u|^p\,dx}.
	\end{equation}
	By \cite[Theorems~3.1 and~3.3]{DR}, the infimum in \eqref{eq:lambda1p} is
	attained, the first eigenvalue is simple, and the associated eigenfunction
	may be chosen strictly positive on $\mathcal G\setminus \mathcal{V}^D$.
	
	\subsection{The \texorpdfstring{$p$}{p}-P\'olya--Szeg\H{o} inequality}
	
	The classical P\'olya--Szeg\H{o} inequality for $p=2$ states that
	$\lambda_{1,2}T_2<|\mathcal G|$ (see \cite[Proposition 5.1]{MuPl}). For $p\neq 2$, the dimensionally homogeneous
	form is obtained by raising the product $\lambda_{1,p}T_p$ to the power
	$1/(p-1)$.
	
	\begin{proposition}
		\label{prop:PS}
		Let $\mathcal G$ be a connected compact metric graph, let
		$\mathcal{V}^D\subseteq \mathcal{V}$ be a nonempty set of degree-one vertices, and let
		$1<p<\infty$. Then
		\begin{equation}
			\label{eq:PS}
			\lambda_{1,p}(\mathcal G,\mathcal{V}^D)T_p(\mathcal G,\mathcal{V}^D)
			<
			|\mathcal G|^{p-1}.
		\end{equation}
	\end{proposition}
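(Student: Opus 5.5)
The plan is to use the torsion function $w_p$ as a competitor in the Rayleigh quotient \eqref{eq:lambda1p} and then apply H\"older's inequality to $\int w_p$ and $\int w_p^p$.

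By the energy identity \eqref{eq:torsion_energy}, $\int_{\mathcal G}|w_p'|^p\,dx=\int_{\mathcal G}w_p\,dx=:M$, and $T_p=M^{p-1}$. Testing \eqref{eq:lambda1p} with $w_p$ gives
\[
\lambda_{1,p}(\mathcal G,\mathcal V^D)\le \frac{M}{\int_{\mathcal G}w_p^p\,dx}.
\]
H\"older's inequality (Jensen) yields $M^p=\bigl(\int_{\mathcal G}w_p\bigr)^p\le|\mathcal G|^{p-1}\int_{\mathcal G}w_p^p$. Hence
\[
\lambda_{1,p}T_p\le\frac{M\cdot M^{p-1}}{\int_{\mathcal G}w_p^p}=\frac{M^p}{\int_{\mathcal G}w_p^p}\le|\mathcal G|^{p-1}.
\]

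The main step is strictness, and both inequalities can be used for it. Equality in H\"older would force $w_p$ to be constant a.e. on $\mathcal G$. This is impossible: $w_p$ is continuous, vanishes on the nonempty set $\mathcal V^D$, and is positive elsewhere by Proposition~\ref{prop:positivity}. Alternatively, equality in the Rayleigh bound would make $w_p$ a first eigenfunction, so $-\Delta_p w_p=\lambda_{1,p}w_p^{p-1}$ edgewise. Comparing with $-\Delta_p w_p=1$ would make $w_p$ constant on each edge, which is again impossible. Either argument alone gives the strict inequality \eqref{eq:PS}. I will use the H\"older one, since it needs no regularity of eigenfunctions.
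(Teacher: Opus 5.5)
Your proof is correct and follows the paper's argument essentially step for step. You test the Rayleigh quotient with $w_p$, use the energy identity $\int_{\mathcal G}|w_p'|^p\,dx=\int_{\mathcal G}w_p\,dx$ and H\"older's inequality, and get strictness from the H\"older equality case, which would force $w_p$ to be constant even though it vanishes on $\mathcal V^D$ and is positive elsewhere (your alternative strictness argument via the eigenvalue equation is also valid, just not needed).
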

	
	\begin{proof}
		Let $w_p\in W^{1,p}_{\mathcal{V}^D}(\mathcal G)$ be the $p$-torsion function. By \eqref{eq:torsion_def} and \eqref{eq:torsion_energy}, we have
		\[
		\int_{\mathcal G}|w_p'|^p\,dx
		=
		\int_{\mathcal G}w_p\,dx
		=
		T_p(\mathcal G,\mathcal{V}^D)^{1/(p-1)}.
		\]
		Moreover, H\"older's inequality gives
		\[
		\int_{\mathcal G}w_p\,dx
		\le
		\left(\int_{\mathcal G}w_p^p\,dx\right)^{1/p}
		|\mathcal G|^{1/q},
		\]
		and hence
		\[
		\int_{\mathcal G}w_p^p\,dx
		\ge
		\frac{T_p(\mathcal G,\mathcal{V}^D)^{p/(p-1)}}{|\mathcal G|^{p-1}}.
		\]
		Using $w_p$ as a test function in the Rayleigh quotient \eqref{eq:lambda1p} therefore yields
		\[
		\lambda_{1,p}(\mathcal G,\mathcal{V}^D)
		\le
		\frac{\int_{\mathcal G}|w_p'|^p\,dx}
		{\int_{\mathcal G}w_p^p\,dx}
		\le
		\frac{|\mathcal G|^{p-1}}{T_p(\mathcal G,\mathcal{V}^D)},
		\]
which shows \eqref{eq:PS}
		It remains to exclude equality. If equality held, then equality would also
		hold in H\"older's inequality, and $w_p$ would be constant almost
		everywhere on $\mathcal G$; being continuous, it would then be constant
		on $\mathcal G$. This is impossible because $w_p=0$ on $\mathcal{V}^D$, while
		$w_p>0$ on $\mathcal G\setminus \mathcal{V}^D$. The inequality is therefore
		strict.
	\end{proof}
	
	\subsection{The Kohler--Jobin inequality}
	
	We now turn to the second main isoperimetric inequality of the paper,
	namely a sharp Kohler--Jobin inequality relating the $p$-torsional
	rigidity to the first Dirichlet eigenvalue of the $p$-Laplacian.
	The quantity we are interested in is
	\[
	\lambda_{1,p}(\mathcal G,\mathcal V^D)\,
	T_p(\mathcal G,\mathcal V^D)^{\frac{p}{2p-1}},
	\]
	which is invariant under a uniform scaling of the edge lengths. Hence,
	as in the classical Euclidean setting, the optimal constant is determined
	by the corresponding one-dimensional model problem.
	
	Our proof follows the rearrangement approach studied in \cite{Brasco} for the
	$p$-Laplacian and adapted to compact metric graphs in \cite{MuPl}
	in the linear case. Starting from the positive first eigenfunction, we construct a suitable comparison
	interval and a rearranged function, which allow us to compare both the first
	eigenvalue and an auxiliary torsional quantity with those of the original
	graph.
	Accordingly, let
	\(
	J=[0,1],
	\)
	equipped with the Dirichlet condition at the endpoint $1$. The first
	Dirichlet eigenvalue of the $p$-Laplacian on $J$ is given by
	\[
	\lambda_{1,p}(J,\{1\})
	=
	(p-1)\Bigl(\frac{\pi_p}{2}\Bigr)^p,
	\]
	where
	\(
	\pi_p=\frac{2\pi}{p\sin(\pi/p)}
	\)
	denotes the first positive zero of the generalized sine function
	$\sin_p$ introduced by Lindqvist \cite{Lindqvist}; see also
	\cite{LE11}. Moreover, Example~\ref{ex:J0} yields
	\[
	T_p(J,\{1\})
	=
	\Bigl(\frac{1}{q+1}\Bigr)^{p-1}.
	\]
	The following theorem shows that the value of the scale-invariant product on
	the model interval is in fact optimal.

	\begin{theorem}
		\label{thm:KJ}
		Let $\mathcal G$ be a connected compact metric graph,
		let $\mathcal V^D\subseteq\mathcal V$ be a nonempty set of degree-one
		vertices, and let $1<p<\infty$. Then
		\begin{equation}
			\label{eq:KJ}
			\lambda_{1,p}(\mathcal G,\mathcal V^D)\,
			T_p(\mathcal G,\mathcal V^D)^{\frac{p}{2p-1}}
			\ge
			C_p^{\mathrm{KJ}},
		\end{equation}
		where 
		\begin{equation}	\label{def:KJconst}
			C_p^{\mathrm{KJ}}
			:=
			\lambda_{1,p}(J,\{1\})
			\,T_p(J,\{1\})^{\frac{p}{2p-1}}
			=
			(p-1)
			\Bigl(\frac{\pi_p}{2}\Bigr)^p
			\Bigl(\frac{1}{q+1}\Bigr)^{\frac{p(p-1)}{2p-1}}.
		\end{equation}
		Equality holds if and only if $\mathcal G$ is an interval with a single Dirichlet endpoint.
	\end{theorem}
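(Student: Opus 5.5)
The plan is to follow Brasco's version of the Kohler--Jobin rearrangement, transplanted to the graph via the level-set analysis already used in the proof of Theorem~\ref{thm:SV}. Let $\psi$ be the first eigenfunction given by \cite[Theorems~3.1 and~3.3]{DR}, normalized with $\psi>0$ on $\mathcal G\setminus\mathcal V^D$ and $\|\psi\|_{L^\infty}=1$. On each edge $\psi$ solves $-(|\psi'|^{p-2}\psi')'=\lambda\psi^{p-1}$ with $\lambda:=\lambda_{1,p}(\mathcal G,\mathcal V^D)$. From this ODE one sees that $\psi$ is edgewise $C^1$ and piecewise strictly monotone with finitely many critical points. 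Hence the distribution function $\mu(t)=|\{\psi>t\}|$ is absolutely continuous and strictly decreasing, and the coarea identities \eqref{eq:coarea_Lp}--\eqref{eq:mdiff} hold for $\psi$ as well.

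First, introduce the modified torsional rigidity. Consider the weighted problem obtained by replacing $dx$ in the numerator with the measure $\psi^{p-1}dx$:
\[
\widetilde T:=\sup_{u\in\WpD\setminus\{0\}}\frac{\bigl(\int_{\mathcal G}|u|\,\psi^{p-1}dx\bigr)^p}{\int_{\mathcal G}|u'|^p\,dx}.
\]
Testing with $u=\psi$ and using the eigen-equation gives $\widetilde T\ge \lambda^{-1}\bigl(\int\psi^p\bigr)^{p-1}$. Conversely, H\"older's inequality in the dual form, as in Proposition~\ref{prop:char}, shows that $\psi$ is the maximizer, because it solves $-\Delta_p\psi=\lambda\psi^{p-1}$. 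Therefore
\[
\widetilde T=\lambda^{-1}\Bigl(\int_{\mathcal G}\psi^p\Bigr)^{p-1}.
\]
Since $\psi\le 1$, we have $\widetilde T\le T_p(\mathcal G,\mathcal V^D)$.

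Second, rearrange on the interval, and this is the Kohler--Jobin step. Following Brasco, define a length $\ell$ and a map from the graph to $[0,\ell]$ using the level sets of $\psi$. Set $s(t)$ through $ds=\bigl(\psi^*\text{-weighted}\bigr)$; concretely, choose $\ell$ so that the interval carries the measure $\nu=\psi^{p-1}dx$ pushed forward, and transplant $\psi$ to a decreasing function $\psi^\sharp$ on $[0,\ell]$, with Dirichlet condition at $\ell$. The new variable is chosen so that $\int(\psi^\sharp)^p$ with respect to the weight equals $\int_{\mathcal G}\psi^p$. Using $N(t)\ge 1$ and H\"older on each level set, exactly as in \eqref{eq:Holder_level}, one gets $\int|(\psi^\sharp)'|^p\le\int|\psi'|^p$. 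This yields $\lambda_{1,p}([0,\ell],\{\ell\})\le\lambda$. The same transplantation also gives $\widetilde T\le\widetilde T([0,\ell])\le T_p([0,\ell],\{\ell\})$.

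The main obstacle is making these two comparisons hold simultaneously. Brasco's trick calibrates $\ell$ by the torsion side, setting $T_p([0,\ell],\{\ell\})=\widetilde T$ via the scaling of Lemma~\ref{lem:scaling}. One then shows the eigenvalue does not increase by a Pólya–Szegő-type monotonicity of $\lambda\widetilde T^{\,p/(2p-1)}$ along the rearrangement flow. I would first try to reproduce Mugnolo–Plümer's linear argument, where the change of variables $ds=|\{\psi=t\}|$-weighted measure makes the Rayleigh quotient compare directly. The difficulty is checking that the $p$-weights $|\psi'|^{p-1}$ fit the $(-\mu')^{1-p}$ structure.

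Finally, combine the two comparisons. We get $\lambda\, T_p(\mathcal G)^{p/(2p-1)}\ge\lambda_{1,p}([0,\ell])\,T_p([0,\ell])^{p/(2p-1)}$, and by Lemma~\ref{lem:scaling} and the $\ell^{-p}$ scaling of the eigenvalue the right-hand side equals $C_p^{\mathrm{KJ}}$.

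For the equality case, equality forces three things: $N(t)=1$ for a.e. level, $\psi\equiv$ const where used, and equality in $\widetilde T\le T_p$. The first, as in the proof of Theorem~\ref{thm:SV} via \cite[Lemma~4.3]{BKKM17}, gives a path with a single Dirichlet endpoint. Conversely, direct computation on the interval gives equality.
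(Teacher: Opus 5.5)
There is a genuine gap, and it sits exactly at the step you call ``the main obstacle''. Your first step is a correct computation, but the quantity $\widetilde T$ you want to calibrate with cannot close that gap. By your own H\"older argument, $\widetilde T$ is attained at $\psi$. Since $\|\psi\|_{L^\infty}=1$ gives $\psi^{p-1}<1$ a.e., you get
$$\widetilde T=\frac{\bigl(\int_{\mathcal G}\psi^p\bigr)^p}{\int_{\mathcal G}|\psi'|^p}<\frac{\bigl(\int_{\mathcal G}\psi\bigr)^p}{\int_{\mathcal G}|\psi'|^p}\le T_p(\mathcal G,\mathcal V^D)$$
strictly, for every graph. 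In particular this holds for $\mathcal G=[0,L]$ with Dirichlet point $L$. Calibrating by $T_p([0,\ell],\{\ell\})=\widetilde T$ (or by $\ell=\int_{\mathcal G}\psi^{p-1}dx$, your other suggestion) then forces $\ell<L$ on that interval. Hence, with $\lambda=\lambda_{1,p}(\mathcal G,\mathcal V^D)$, you get $\lambda_{1,p}([0,\ell],\{\ell\})=(L/\ell)^p\lambda_{1,p}([0,L],\{L\})>\lambda$, so the claimed eigenvalue comparison is false.

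Equivalently, your chain would prove $\lambda\,\widetilde T^{p/(2p-1)}\ge C_p^{\mathrm{KJ}}$. This fails on the interval, where $\lambda T_p^{p/(2p-1)}=C_p^{\mathrm{KJ}}$ and $\widetilde T<T_p$. The same defect shows up in your equality discussion, since equality in $\widetilde T\le T_p$ never occurs. Two further problems:
\begin{itemize}
\item The inequality $\widetilde T\le T_p([0,\ell],\{\ell\})$ points the wrong way; you need the interval's torsion to be at most $T_p(\mathcal G,\mathcal V^D)$.
\item An $L^p$ equality ``with respect to the weight'' does not control the unweighted Rayleigh quotient on $[0,\ell]$.
\end{itemize}
The plain decreasing rearrangement ($N(t)\ge1$ plus H\"older) only produces the interval of length $|\mathcal G|$. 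Its torsion exceeds $T_p(\mathcal G,\mathcal V^D)$ by Theorem~\ref{thm:SV}, which is precisely why a calibration is needed at all.

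What is missing is a modified torsion built from the level-set data of $\psi$ that is $\le T_p$ and \emph{equal} to $T_p$ on the interval. The paper takes $\alpha(t)=|\{\psi>t\}|$, $\gamma_p(t)=\sum_{\psi=t}|\psi'|^{p-1}$ and $I(t)=\int_t^M\alpha(s)^q\gamma_p(s)^{1-q}\,ds$.
\begin{itemize}
\item The function $\eta=F(\psi)$ with $F'=(\alpha/\gamma_p)^{q-1}$ has $\int_{\mathcal G}\eta=\int_{\mathcal G}|\eta'|^p=I(0)$, so $I(0)^{p-1}\le T_p(\mathcal G,\mathcal V^D)$.
\item $N(t)\ge1$ and H\"older give $I(t)\le\alpha(t)^{q+1}/(q+1)$, that is, $\alpha(\varphi(\tau))\ge R(\tau):=((q+1)\tau)^{1/(q+1)}$ with $\varphi=I^{-1}$.
\item On $J_\psi=(0,\ell_\psi)$ with $\ell_\psi=R(I(0))$, whose torsion is exactly $I(0)^{p-1}$, one sets $\psi^*(R(\tau))=\kappa(\tau)$, where $\kappa'=\alpha(\varphi)\varphi'/R$ and $\kappa(I(0))=0$.
\item Using $R'=R^{-q}$ and $p(q-1)=q$, the Dirichlet energy is preserved exactly.
\item The bound $\alpha(\varphi)\ge R$ gives $\kappa\ge\varphi$, and the layer-cake formula then yields $\int_{J_\psi}|\psi^*|^p\ge\int_{\mathcal G}|\psi|^p$.
\end{itemize}
This gives $\lambda_{1,p}(J_\psi,\{\ell_\psi\})\le\lambda$ and $T_p(J_\psi,\{\ell_\psi\})\le T_p(\mathcal G,\mathcal V^D)$ simultaneously. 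In the equality case it forces $\kappa\equiv\varphi$ and $\alpha(\varphi)=R$, hence $\ell_\psi=|\mathcal G|$ and equality in Theorem~\ref{thm:SV}.
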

	
	For $p=2$, the constant reduces to
	\(
	C_2^{\mathrm{KJ}}
	=
	\Bigl(\frac{\pi}{\sqrt[3]{24}}\Bigr)^2,
	\)
	recovering the sharp Kohler--Jobin inequality of
	\cite[Theorem~5.8]{MuPl}.
	
	The proof is based on the level-set structure of the positive first
	eigenfunction. We therefore introduce the distribution function of its
	superlevel sets together with the corresponding nonlinear flux across the
	level sets.
	
	Let $\psi\in\WpD$ be the positive first eigenfunction normalized by
	\(
	\|\psi\|_{L^p(\mathcal G)}=1,
	\)
	and set
	\(
	M:=\|\psi\|_{L^\infty(\mathcal G)}.
	\)
	For $t\in(0,M)$, let
	\[
	\alpha(t)
	:=
	|\{x\in\mathcal G:\psi(x)>t\}|
	\]
	denote the distribution function of the superlevel set
	$\{\psi>t\}$, and let
	\[
	\gamma_p(t)
	:=
	\sum_{x\in\psi^{-1}(t)}
	|\psi'(x)|^{p-1}
	\]
	denote the total nonlinear flux across the level set
	$\{\psi=t\}$.
	We introduce
	\[
	I(t)
	:=
	\int_t^M
	\frac{\alpha(s)^q}{\gamma_p(s)^{\,q-1}}
	\,ds,
	\]
	whose finiteness will be proved in the next lemma.
	Accordingly, we define the associated modified $p$-torsional rigidity by
	\[
	T_{p,\mathrm{mod}}(\mathcal{G})
	:=
	I(0)^{p-1}.
	\]
	\begin{lemma}
		\label{lem:modtorsion}
 $T_{p,\mathrm{mod}}(\mathcal{G})$ is well defined and satisfies
		\(
		T_{p,\mathrm{mod}}(\mathcal{G})
		\le
		T_p(\mathcal G,\mathcal V^D).
		\)
	\end{lemma}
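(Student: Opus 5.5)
Our plan is to build a test function $u$ on $\mathcal G$ out of the first eigenfunction $\psi$, by composing $\psi$ with a suitable profile, and then to plug $u$ into the variational characterization of Proposition~\ref{prop:char}. We choose
\[
u(x):=I(\psi(x))=\int_{\psi(x)}^M\frac{\alpha(s)^q}{\gamma_p(s)^{q-1}}\,ds .
\]
This is the nonlinear analogue of the Kohler--Jobin and Brasco construction. Since $\psi$ vanishes on $\mathcal V^D$, the function $u$ is not zero there; the natural competitor is instead
\[
u=F(\psi),\qquad F(t):=\int_0^t\frac{\alpha(s)^{q-1}}{\gamma_p(s)^{q-1}}\,ds .
\]
This $F$ is nondecreasing with $F(0)=0$, so $u\in\WpD$ provided $F$ is finite and Lipschitz on the relevant range. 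We first settle well-definedness. As in the proof of Theorem~\ref{thm:SV}, $\psi$ is edgewise $C^1$ and piecewise strictly monotone, with finitely many critical values. Hence $\alpha$ is absolutely continuous, and
\[
-\alpha'(t)=\sum_{x\in\psi^{-1}(t)}|\psi'(x)|^{-1}\quad\text{for a.e. }t.
\]
The key identity for finiteness comes from integrating the eigen-equation $-(|\psi'|^{p-2}\psi')'=\lambda\psi^{p-1}$ over the superlevel set $\{\psi>t\}$. The Kirchhoff conditions cancel the interior vertex terms, and the boundary terms are exactly the outward fluxes at $\psi^{-1}(t)$. This gives
\[
\gamma_p(t)=\lambda_{1,p}\int_{\{\psi>t\}}\psi^{p-1}\,dx ,
\]
so $\gamma_p(t)\ge\lambda_{1,p}\,t^{p-1}\alpha(t)>0$. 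Consequently the integrand $\alpha^q/\gamma_p^{q-1}$ is bounded by $\alpha(s)\,\lambda^{1-q}s^{-1}$ near $s=0$. That bound alone is not integrable, so we need more care. Using instead $\gamma_p(t)\ge\lambda\int_{\{\psi>t\}}\psi^{p-1}\ge c>0$ for $t$ small, since the set $\{\psi>t\}$ increases to $\mathcal G$, the integrand is bounded on $(0,M)$. Near $M$ it tends to $0$, because $\alpha\to0$ while $\gamma_p$ stays bounded below by $\lambda M^{p-1}\alpha$. Hence $I(0)<\infty$.

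Next we compute the two sides of the Rayleigh-type quotient for $u=F(\psi)$. For the mass we use the layer-cake formula:
\[
\int_{\mathcal G}u=\int_0^M F'(t)\,\alpha(t)\,dt=\int_0^M\frac{\alpha^q}{\gamma_p^{q-1}}\,dt=I(0).
\]
For the energy we use the coarea formula from the proof of Theorem~\ref{thm:SV}:
\[
\int_{\mathcal G}|u'|^p=\int_0^M F'(t)^p\sum_{\psi^{-1}(t)}|\psi'|^{p-1}\,dt=\int_0^M\frac{\alpha^{(q-1)p}}{\gamma_p^{(q-1)p}}\,\gamma_p\,dt .
\]
Since $(q-1)p=q$, this equals $\int_0^M\alpha^q\gamma_p^{1-q}\,dt=I(0)$. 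Proposition~\ref{prop:char} then yields
\[
T_p(\mathcal G,\mathcal V^D)\ge \frac{I(0)^p}{I(0)}=I(0)^{p-1}=T_{p,\mathrm{mod}}(\mathcal G).
\]

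The main obstacle is justifying the regularity needed for these computations. First, $u\in\WpD$ requires $F$ to be locally Lipschitz on $[0,M]$, that is, $\alpha/\gamma_p$ must be bounded. This follows from the flux identity provided $\gamma_p$ is bounded below away from $t=M$, and near $t=M$ from $\gamma_p\ge\lambda t^{p-1}\alpha$. Second, the coarea and chain-rule steps must be valid at the finitely many critical or vertex levels, which are a null set of $t$. I would establish the flux identity carefully first, by integrating the edgewise equation on each component of $\{\psi>t\}$ for regular $t$ and using the $p$-Kirchhoff condition at interior vertices. I would then handle the level sets exactly as in Theorem~\ref{thm:SV}, with $\psi$ in place of $w_p$. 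This uses the edgewise analyticity-type structure of $\psi$, namely finitely many monotonicity intervals, which is available from the ODE $-(|\psi'|^{p-2}\psi')'=\lambda\psi^{p-1}$ with $\psi>0$.
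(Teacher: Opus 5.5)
Your proof is correct, and its core is the paper's construction. You use the same competitor $\eta=F(\psi)$ with $F'=(\alpha/\gamma_p)^{q-1}$, the same layer-cake and coarea computations giving $\int_{\mathcal G}\eta\,dx=\int_{\mathcal G}|\eta'|^p\,dx=I(0)$, and the same appeal to Proposition~\ref{prop:char}.

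The genuine difference is how you prove finiteness. You use the eigenvalue equation through the flux identity $\gamma_p(t)=\lambda_{1,p}\int_{\{\psi>t\}}\psi^{p-1}\,dx$. This makes $\alpha/\gamma_p$ bounded, hence $F$ Lipschitz, so $F(\psi)\in\WpD$ follows at once from the chain rule. The paper instead uses only level-set geometry: $N(t)\ge 1$ together with H\"older gives $\gamma_p(t)^{1-q}\le-\alpha'(t)$. Hence $A\le\alpha^q(-\alpha')$ and $I(t)\le\alpha(t)^{q+1}/(q+1)$, which yields only absolute continuity of $F$. However, that quantitative bound on $I(t)$ is exactly what the paper inverts to obtain $\alpha(\varphi(\tau))\ge R(\tau)$ in Lemma~\ref{lem:KJrearrange}. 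Your route therefore proves the present lemma but would still need that estimate separately for the rest of the Kohler--Jobin argument.

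There is one small slip. Near $t=M$ the lower bound you need is $\gamma_p(t)\ge\lambda_{1,p}t^{p-1}\alpha(t)$. The bound $\lambda_{1,p}M^{p-1}\alpha(t)$ that you wrote is an upper bound, not a lower one. The conclusion is unaffected, since $t$ stays away from $0$ in that range.
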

	\begin{proof}
		We first verify that the auxiliary
		quantities $\gamma_p$, $I$, and $T_{p,\mathrm{mod}}(\mathcal G)$ are
		well defined; we then construct, out of the level-set structure of
		$\psi$, an explicit test function $\eta\in\WpD$ whose P\'olya quotient
		equals $I(0)^{p-1}$. The claimed inequality then follows from the
		variational characterisation of Proposition~\ref{prop:char}.

		For almost every $t\in(0,M)$ the level
		set $\{\psi=t\}$ consists of finitely many points lying in the interior
		of edges, at each of which $\psi'\neq0$; moreover, $\alpha$ is
		absolutely continuous with
		\[
		-\alpha'(t)
		=
		\sum_{x\in\psi^{-1}(t)}
		\frac1{|\psi'(x)|}
		\qquad\text{for a.e. }t\in(0,M),
		\]
		and $\gamma_p(t)\in(0,\infty)$ for almost every $t$. Since, for
		$t\in(0,M)$, the superlevel set $\{\psi>t\}$ is nonempty and a proper
		subset of $\mathcal G$, its boundary is nonempty; hence the number
		$N(t)$ of points in the level set $\{\psi=t\}$ satisfies $N(t)\ge1$.
		Applying H\"older's inequality with conjugate exponents $p$ and
		$p/(p-1)$, exactly as in \eqref{eq:Holder_level}, we obtain
		\begin{equation}
			\label{eq:levelset_holder}
			1
			\le
			N(t)^p
			\le
			\gamma_p(t)\bigl(-\alpha'(t)\bigr)^{p-1}
			\qquad
			\text{for a.e. }t\in(0,M).
		\end{equation}
		Introducing
		\(
		A(t):=
		\frac{\alpha(t)^q}{\gamma_p(t)^{q-1}},
		\)
		and observing that \eqref{eq:levelset_holder} is equivalent, by
		$(p-1)(q-1)=1$, to $\gamma_p(t)^{1-q}\le-\alpha'(t)$, we deduce
		\begin{equation}
			\label{eq:A_estimate}
			A(t)
			\le
			\alpha(t)^q\bigl(-\alpha'(t)\bigr)
			\qquad
			\text{for a.e. }t\in(0,M).
		\end{equation}
		Consequently, using the absolute continuity of $\alpha$ together with
		$\alpha(M)=0$,
		\begin{equation}\label{eq:est_on_I}
			I(t)
			=
			\int_t^M A(s)\,ds
			\le
			\int_t^M
			\alpha(s)^q\bigl(-\alpha'(s)\bigr)\,ds
			=
			\frac{\alpha(t)^{q+1}}{q+1}
			<\infty,
		\end{equation}
		showing that both $I$ and $T_{p,\mathrm{mod}}(\mathcal{G})$ are well
		defined.

		We define
		\[
		F(t)
		:=
		\int_0^t
		\left(
		\frac{\alpha(s)}{\gamma_p(s)}
		\right)^{q-1}
		\,ds,
		\qquad
		0\le t\le M,
		\]
		and set
		\(
		\eta(x):=F(\psi(x)),
		\) \(x\in\mathcal G.
		\)
		The function $F$ is well defined and finite: since
		$(\alpha/\gamma_p)^{q-1}=A/\alpha$, the estimate \eqref{eq:A_estimate}
		yields
		\[
		\left(\frac{\alpha(s)}{\gamma_p(s)}\right)^{q-1}
		\le
		\alpha(s)^{q-1}\bigl(-\alpha'(s)\bigr)
		\qquad\text{for a.e. }s\in(0,M),
		\]
		and the right-hand side is integrable, with
		\[
		\int_0^M\alpha(s)^{q-1}\bigl(-\alpha'(s)\bigr)\,ds
		=\frac{\alpha(0)^q}{q}
		=\frac{|\mathcal G|^q}{q}.
		\]
		Being the indefinite
		integral of a nonnegative $L^1$ function, $F$ is absolutely continuous
		and non-decreasing on $[0,M]$ with $F(0)=0$.
		We claim that $\eta\in\WpD$.
		Let us compute $\int_{\mathcal G}\eta\,dx$. Applying the coarea
		formula to the composition $\eta=F(\psi)$ gives
		\[
		\int_{\mathcal G}\eta\,dx
		=
		\int_0^M
		F(t)\bigl(-\alpha'(t)\bigr)\,dt.
		\]
		Since $F(0)=0$ and $\alpha(M)=0$, and both $F$ and $\alpha$ are
		absolutely continuous, integration by parts together with the identity
		\[
		F'(t)\alpha(t)
		=
		\frac{\alpha(t)^q}{\gamma_p(t)^{q-1}}
		=
		A(t)
		\]
		yields
		\[
		\int_{\mathcal G}\eta\,dx
		=
		\int_0^M F'(t)\alpha(t)\,dt
		=
		\int_0^M
		A(t)\,dt
		=
		I(0).
		\]
		Next we compute the Dirichlet energy of $\eta$. By the chain rule,
		\(
		|\eta'|^p
		=
		|F'(\psi)|^p|\psi'|^p,
		\)
		and the coarea formula, applied as in \eqref{eq:coarea_Lp} with the
		weight $|\psi'|^p=|\psi'|^{p-1}\cdot|\psi'|$, gives
		\[
		\int_{\mathcal G}|\eta'|^p\,dx
		=
		\int_0^M
		|F'(t)|^p\gamma_p(t)\,dt.
		\]
		Since
		\(
		p(q-1)=q,
		\)
		we have
		\[
		|F'(t)|^p\gamma_p(t)
		=
		\left(
		\frac{\alpha(t)}{\gamma_p(t)}
		\right)^q
		\gamma_p(t)
		=
		A(t),
		\]
		and hence
		\[
		\int_{\mathcal G}|\eta'|^p\,dx
		=
		\int_0^M
		A(t)\,dt
		=
		I(0)
		<\infty.
		\]
		In particular, $\eta'\in L^p(\mathcal G)$, and
		$\eta\not\equiv0$ since $\int_{\mathcal G}\eta\,dx=I(0)>0$. Therefore
		$\eta$ is an admissible test function in the variational
		characterisation of the $p$-torsional rigidity
		(Proposition~\ref{prop:char}), which yields
		\[
		T_p(\mathcal G,\mathcal V^D)
		\ge
		\frac{\left(\int_{\mathcal G}\eta\,dx\right)^p}
		{\int_{\mathcal G}|\eta'|^p\,dx}
		=
		\frac{I(0)^p}{I(0)}
		=
		I(0)^{p-1}
		=
		T_{p,\mathrm{mod}}(\mathcal{G}).\qedhere
		\]
	\end{proof}
	The next step is to construct a one-dimensional comparison function by
	means of a rearrangement argument. In the Euclidean setting, this approach
	was studied by Brasco for the $p$-Laplacian \cite{Brasco}, while
	Mugnolo and Pl\"umer adapted it to compact metric graphs in the linear case
	\cite[Lemma~5.17]{MuPl}. The following lemma extends the graph
	rearrangement argument to the nonlinear setting.
	
	Motivated by the definition of $T_{p,\mathrm{mod}}(\mathcal{G})$, we introduce the
	interval
	\(
	J_\psi:=(0,\ell_\psi),\) where
	\(\ell_\psi:=\bigl((q+1)I(0)\bigr)^{1/(q+1)}.
	\)
	With this choice, we have
	\begin{equation}\label{eq:Modtor}
	T_p(J_\psi,\{\ell_\psi\})
=
\left(
\frac{\ell_\psi^{q+1}}{q+1}
\right)^{p-1}
=
I(0)^{p-1}
=
T_{p,\mathrm{mod}}(\mathcal{G}).
	\end{equation}

	\begin{lemma}
		\label{lem:KJrearrange}
		There exists a rearranged function
		\(
		\psi^*\in W^{1,p}(J_\psi),\) \(	\psi^*(\ell_\psi)=0,
		\)
		such that
		\begin{equation}
			\label{eq:KJrear1}
			\int_{J_\psi}|(\psi^*)'|^p\,dx
			=
			\int_{\mathcal G}|\psi'|^p\,dx,
		\end{equation}
		and
		\begin{equation}
			\label{eq:KJrear2}
			\int_{J_\psi}|\psi^*|^p\,dx
			\ge
			\int_{\mathcal G}|\psi|^p\,dx.
		\end{equation}
	\end{lemma}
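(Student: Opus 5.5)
The plan is a Brasco-type change of variables: we transport the level sets of $\psi$ onto the interval $J_\psi$ using the function $I$, and we change the \emph{values} as well, so that the $p$-Dirichlet energy is preserved exactly. For $t\in[0,M]$ set
\[
r(t):=\bigl((q+1)I(t)\bigr)^{1/(q+1)} .
\]
Then $r(0)=\ell_\psi$ and $r(M)=0$. Since $I'(t)=-A(t)<0$ for a.e.\ $t$, the function $r$ is absolutely continuous and strictly decreasing, with
\[
-r'(t)=r(t)^{-q}A(t)=r(t)^{-q}\,\frac{\alpha(t)^q}{\gamma_p(t)^{q-1}} .
\]
By \eqref{eq:est_on_I} we also have $r(t)\le\alpha(t)$.

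Next we introduce a new value profile
\[
g(t):=\int_0^t\frac{\alpha(s)}{r(s)}\,ds,\qquad g(t)\ge t,
\]
and define $\psi^*$ on $J_\psi$ by $\psi^*(r(t)):=g(t)$. Equivalently, $\psi^*:=g\circ r^{-1}$. This function is non-increasing, and $\psi^*(\ell_\psi)=g(0)=0$.

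The energy identity \eqref{eq:KJrear1} comes from the one-dimensional change of variables $s=r(t)$:
\[
\int_{J_\psi}|(\psi^*)'|^p\,ds=\int_0^M g'(t)^p\,(-r'(t))^{1-p}\,dt .
\]
Using $(p-1)q=p$ and $(q-1)(p-1)=1$, one computes $(-r')^{1-p}=(r/\alpha)^p\gamma_p$. Hence $g'^p(-r')^{1-p}=\gamma_p$, and the coarea formula \eqref{eq:coarea_Lp}, applied to $\psi$, gives
\[
\int_0^M\gamma_p(t)\,dt=\int_{\mathcal G}|\psi'|^p\,dx .
\]
This is exactly why $g'=\alpha/r$ was chosen. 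For the $L^p$ inequality \eqref{eq:KJrear2}, the layer-cake formula on the interval gives $|\{\psi^*>g(t)\}|=r(t)$. Therefore
\[
\int_{J_\psi}(\psi^*)^p\,ds=\int_0^M p\,g(t)^{p-1}g'(t)\,r(t)\,dt=\int_0^M p\,g(t)^{p-1}\alpha(t)\,dt .
\]
Since $g(t)\ge t$, this is at least $\int_0^M p\,t^{p-1}\alpha(t)\,dt=\int_{\mathcal G}\psi^p\,dx$.

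The main obstacle is regularity: showing that $\psi^*\in W^{1,p}(J_\psi)$, and in particular that $g(M)<\infty$, i.e.\ that $\alpha/r$ is integrable near $t=M$ where both vanish. The first thing I would try is to avoid needing $g(M)<\infty$ a priori. I would prove the two identities on $J_\psi\cap(r(M-\varepsilon),\ell_\psi)$, where everything is finite because $r\ge r(M-\varepsilon)>0$. Then I would let $\varepsilon\to0$: the energy stays bounded by $\int_{\mathcal G}|\psi'|^p$, so by the Poincar\'e inequality on $J_\psi$ (with the Dirichlet condition at $\ell_\psi$) the function $\psi^*$ is bounded, hence $g(M)<\infty$ a posteriori, and monotone convergence gives both statements. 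Absolute continuity of $\psi^*$ itself follows because it is the composition of the absolutely continuous increasing $g$ with the inverse of the strictly monotone absolutely continuous $r$, whose derivative $-r'>0$ a.e.; the finite energy then yields $(\psi^*)'\in L^p$.
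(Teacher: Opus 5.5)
Your proposal is correct and is the paper's construction, written in the level variable $t$ instead of the torsion variable $\tau=I(t)$. Your $r=R\circ I$ and $g=\kappa\circ I$ (substitute $u=\varphi(s)$ in the paper's formula for $\kappa$), so $\psi^*(r(t))=g(t)$ is exactly the paper's $\psi^*(R(\tau))=\kappa(\tau)$, and your energy and layer-cake computations are the paper's after this change of variables. Your truncation-plus-Poincar\'e argument giving $g(M)<\infty$ (equivalently $\kappa(0)<\infty$), and hence $\psi^*\in W^{1,p}(J_\psi)$, is a welcome addition, since the paper asserts this membership without checking it.
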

	\begin{proof}
	The construction is based on the idea of \emph{transplantation with equal
		Dirichlet integrals}, which lies at the heart of the
	Kohler--Jobin method. We construct a function $\psi^*$ on the calibrated
	interval $J_\psi$ whose level structure is determined by that of $\psi$,
	so that the Dirichlet energy is preserved exactly, while the comparison
	of superlevel sets ensures that no $L^p$-mass is lost. Rather than using
	the level parameter $t$, it is more convenient to work with the modified
	torsion variable $\tau=I(t)$. We therefore let
	\[
	\varphi:=I^{-1}:[0,I(0)]\to[0,M]
	\]
	be the inverse of $I$. Since $I$ is absolutely continuous, being the
	indefinite integral of $A\in L^1(0,M)$, and is strictly decreasing with
	$I'=-A\neq0$ almost everywhere, its inverse $\varphi$ is continuous,
	strictly decreasing and absolutely
	continuous, with $\varphi'\neq0$ almost everywhere. We also define
	\[
	R(\tau):=((q+1)\tau)^{1/(q+1)}.
	\]
	Then $R$ maps $[0,I(0)]$ increasingly onto $[0,\ell_\psi]$. It serves as
	the model profile since, on the Dirichlet--Neumann interval, the quantity
	corresponding to $\alpha(\varphi(\tau))$ is exactly $R(\tau)$, as shown
	in Example~\ref{ex:J0}. The next estimate shows that the superlevel sets
	of $\psi$ on $\mathcal G$ are always at least as large as those of this
	model.
	
	Evaluating \eqref{eq:est_on_I} at $t=\varphi(\tau)$ and then inverting the
	resulting inequality gives the desired comparison between the
	distribution function of $\psi$ and the model profile:
	\begin{equation}
		\label{eq:alpha_R_estimate}
		\alpha(\varphi(\tau))
		\ge
		R(\tau)
		\qquad
		\text{for }0\le \tau\le I(0).
	\end{equation}
	We are now ready to transplant $\psi$ onto $J_\psi$. The idea is to
	prescribe the derivative of the transplanted profile so that the
	Dirichlet energy is reproduced exactly, level by level. To this end, we
	define a function $\kappa$ on $[0,I(0)]$ by requiring it to satisfy
	\[
	-R(\tau)\kappa'(\tau)
	=
	-\alpha(\varphi(\tau))\varphi'(\tau),
	\qquad
	\kappa(I(0))=0,
	\]
	or equivalently,
	\[
	\kappa(\tau)
	=
	\int_\tau^{I(0)}
	\frac{\alpha(\varphi(s))}{R(s)}
	\bigl(-\varphi'(s)\bigr)\,ds.
	\]
	Observe that $\varphi$ is decreasing, so the integrand is nonnegative.
	Consequently, $\kappa$ is nonnegative and non-increasing. Moreover,
	\eqref{eq:alpha_R_estimate} gives
	$\alpha(\varphi(s))/R(s)\ge1$, and comparison with the identity
	\[
	\varphi(\tau)
	=
	\int_\tau^{I(0)}
	\bigl(-\varphi'(s)\bigr)\,ds
	\]
	shows that
	\[
	\kappa(\tau)\ge\varphi(\tau)
	\qquad
	\text{for }0\le\tau\le I(0).
	\]
	Thus the transplanted profile dominates the original level
	parametrisation pointwise, and this comparison will be responsible for
	the $L^p$-estimate below.
	
	We now define $\psi^*$ on $J_\psi$ by
	\[
	\psi^*(R(\tau))
	:=
	\kappa(\tau),
	\qquad
	0\le\tau\le I(0),
	\]
	which is well defined since $R$ is a bijection from $[0,I(0)]$ onto
	$[0,\ell_\psi]$. Because $R(I(0))=\ell_\psi$ and
	$\kappa(I(0))=0$, the function $\psi^*$ satisfies the required
	Dirichlet condition
	\(
	\psi^*(\ell_\psi)=0.
	\)
	
	We next prove \eqref{eq:KJrear1}. Differentiating
	$R(\tau)=((q+1)\tau)^{1/(q+1)}$ gives
	\[
	R'(\tau)=R(\tau)^{-q},
	\]
	and using the change of variables $x=R(\tau)$ together with the defining
	equation for $\kappa$, we obtain
	\[
	\int_{J_\psi}|(\psi^*)'|^p\,dx
	=
	\int_0^{I(0)}
	\frac{|\kappa'(\tau)|^p}{R'(\tau)^{p-1}}\,d\tau
	=
	\int_0^{I(0)}
	\alpha(\varphi(\tau))^p
	|\varphi'(\tau)|^p\,d\tau.
	\]
		It remains to recognise the right-hand side as the Dirichlet energy of
		$\psi$. As \(\varphi=I^{-1}\) and
		\(
		I'(t)
		=
		-\frac{\alpha(t)^q}{\gamma_p(t)^{q-1}},
		\)
		we have
		\(
		|\varphi'(\tau)|
		=
		\frac{\gamma_p(\varphi(\tau))^{q-1}}
		{\alpha(\varphi(\tau))^q}.
		\)
		Since \(p(q-1)=q\), it follows that
		\[
		\alpha(\varphi(\tau))^p|\varphi'(\tau)|^p
		=
		\frac{\gamma_p(\varphi(\tau))^q}
		{\alpha(\varphi(\tau))^q}.
		\]
		Therefore, changing variables \(t=\varphi(\tau)\), so that
		$d\tau=|I'(t)|\,dt=\bigl(\alpha(t)^q/\gamma_p(t)^{q-1}\bigr)\,dt$,
		\[
		\int_{J_\psi}|(\psi^*)'|^p\,dx
		=
		\int_0^M \gamma_p(t)\,dt
		=
		\int_{\mathcal G}|\psi'|^p\,dx,
		\]
		where the last equality follows from the coarea formula, as in
		\eqref{eq:coarea_Lp}.
		
	We now turn to the proof of \eqref{eq:KJrear2}. Since $\psi^*$ is
	non-increasing and satisfies $\psi^*(\ell_\psi)=0$, the layer-cake
	formula, together with the change of variables $x=R(\tau)$ and the
	identity $R'(\tau)R(\tau)^q=1$, yields
	\[
	\int_{J_\psi}|\psi^*|^p\,dx
	=
	p\int_0^{I(0)}
	\kappa(\tau)^{p-1}
	\alpha(\varphi(\tau))
	|\varphi'(\tau)|\,d\tau.
	\]
	We now use the pointwise estimate
	$\kappa\ge\varphi$. Since the weight
	$\alpha(\varphi(\tau))|\varphi'(\tau)|$ is nonnegative, it follows that
	\[
	\int_{J_\psi}|\psi^*|^p\,dx
	\ge
	p\int_0^{I(0)}
	\varphi(\tau)^{p-1}
	\alpha(\varphi(\tau))
	|\varphi'(\tau)|\,d\tau
	=
	\int_{\mathcal G}|\psi|^p\,dx,
	\]
	where the last equality is simply the layer-cake representation for
	$\psi$, after the change of variables $t=\varphi(\tau)$. This establishes
	\eqref{eq:KJrear2} and completes the proof.
	\end{proof}
	Before proving Theorem~\ref{thm:KJ}, we show a simple consequence of the
	scaling laws for $\lambda_{1,p}$ and $T_p$.
	\begin{lemma}
		\label{lem:KJscaling}
		Let $(\mathcal G,\mathcal V^D)$ be a compact metric graph with a nonempty
		Dirichlet set, and let $c>0$. Denote by
		$c\mathcal G$ the graph obtained by multiplying all edge lengths by $c$.
		Then
		\[
		\lambda_{1,p}(c\mathcal G,\mathcal V^D)\,
		T_p(c\mathcal G,\mathcal V^D)^{\frac{p}{2p-1}}
		=
		\lambda_{1,p}(\mathcal G,\mathcal V^D)\,
		T_p(\mathcal G,\mathcal V^D)^{\frac{p}{2p-1}}.
		\]
	\end{lemma}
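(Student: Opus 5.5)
The plan is to combine the scaling law for $T_p$ from Lemma~\ref{lem:scaling} with the corresponding scaling law for $\lambda_{1,p}$, which I will derive directly from the Rayleigh quotient \eqref{eq:lambda1p} by the same change of variables.

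\emph{Scaling of the eigenvalue.} For $u\in\WpD$ set $u_c(x):=u(x/c)$ on $c\mathcal G$. As in the proof of Lemma~\ref{lem:scaling}, the map $u\mapsto u_c$ is a bijection of $\WpD$ onto $W^{1,p}_{\mathcal V^D}(c\mathcal G)$; it preserves continuity at the vertices and the vanishing on $\mathcal V^D$. The change of variables $x=cy$ gives
\[
\int_{c\mathcal G}|u_c|^p\,dx=c\int_{\mathcal G}|u|^p\,dy,
\qquad
\int_{c\mathcal G}|u_c'|^p\,dx=c^{1-p}\int_{\mathcal G}|u'|^p\,dy .
\]
Hence the Rayleigh quotient of $u_c$ equals $c^{-p}$ times that of $u$. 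Taking the infimum over all $u$, which is legitimate because the map is a bijection, yields
\[
\lambda_{1,p}(c\mathcal G,\mathcal V^D)=c^{-p}\lambda_{1,p}(\mathcal G,\mathcal V^D).
\]

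\emph{Combination.} By Lemma~\ref{lem:scaling}, $T_p(c\mathcal G,\mathcal V^D)=c^{2p-1}T_p(\mathcal G,\mathcal V^D)$. Raising this to the power $p/(2p-1)$ produces the factor $c^{p}$. This cancels exactly the factor $c^{-p}$ coming from the eigenvalue, which gives the claimed identity.

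I expect no genuine obstacle. The only point needing care is that the rescaling is a bijection between the energy spaces, so that both the infimum and the supremum transform exactly rather than only up to an inequality. Everything else is a one-line exponent check.
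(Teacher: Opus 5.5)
Your proposal is correct and follows the paper's proof: you derive $\lambda_{1,p}(c\mathcal G,\mathcal V^D)=c^{-p}\lambda_{1,p}(\mathcal G,\mathcal V^D)$ by the same change of variables used in Lemma~\ref{lem:scaling}, and cancel it against the factor $c^{(2p-1)\frac{p}{2p-1}}=c^{p}$. Where the paper only asserts the eigenvalue scaling law, you write it out, including the bijection of the energy spaces, which is a welcome addition.
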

	
	\begin{proof}
	The scaling law for $T_p$ was proved in
	Lemma~\ref{lem:scaling}, and the same change-of-variables argument yields
	the corresponding scaling law for $\lambda_{1,p}$. Consequently,
		\[
		\lambda_{1,p}(c\mathcal G,\mathcal V^D)
		T_p(c\mathcal G,\mathcal V^D)^{\frac{p}{2p-1}}
		=
		c^{-p}
		c^{(2p-1)\frac{p}{2p-1}}
		\lambda_{1,p}(\mathcal G,\mathcal V^D)
		T_p(\mathcal G,\mathcal V^D)^{\frac{p}{2p-1}},
		\]
		and the exponent of $c$ vanishes since
		\(
		-p+(2p-1)\frac{p}{2p-1}=0.
		\)
	\end{proof}
	\begin{proof}[Proof of Theorem~\ref{thm:KJ}]
	We compare $\mathcal G$ with the interval $J_\psi$ constructed from the
	first eigenfunction $\psi$. The definition of $\ell_\psi$ through the
	modified $p$-torsional quantity is precisely what makes this comparison work:
	it yields simultaneous bounds for both the first eigenvalue and the
	torsional rigidity, whereas these quantities would scale in opposite
	directions for an interval of arbitrary length. Since the
	Kohler--Jobin product is scale invariant and therefore equals
	$C_p^{\mathrm{KJ}}$ on every Dirichlet--Neumann interval, the theorem
	follows immediately.
		
		Let $\psi\in\WpD$ be the positive first eigenfunction normalized by
		\(
		\|\psi\|_{L^p(\mathcal G)}=1.
		\)
		We use the notation introduced above and set
		\(
		J_\psi=(0,\ell_\psi),\) where \(
		\ell_\psi=\bigl((q+1)I(0)\bigr)^{1/(q+1)}.
		\)
		By Lemma~\ref{lem:KJrearrange}, there exists
		$\psi^*\in W^{1,p}(J_\psi)$ with $\psi^*(\ell_\psi)=0$ such that
		\[
		\int_{J_\psi}|(\psi^*)'|^p\,dx
		=
		\int_{\mathcal G}|\psi'|^p\,dx,
		\qquad
		\int_{J_\psi}|\psi^*|^p\,dx
		\ge
		\int_{\mathcal G}|\psi|^p\,dx.
		\]
		Since $\psi$ is normalized and realizes the first eigenvalue, we have
		\[
		\int_{\mathcal G}|\psi|^p\,dx=1,
		\qquad
		\int_{\mathcal G}|\psi'|^p\,dx
		=
		\lambda_{1,p}(\mathcal G,\mathcal V^D).
		\]
		In other words, the transplantation does not increase the Rayleigh
		quotient: $\psi^*$ is an admissible test function for the Rayleigh
		quotient on the interval $J_\psi$ with Dirichlet condition at
		$\ell_\psi$, and therefore
		\begin{equation}
			\label{eq:KJ_lambda_comparison}
			\lambda_{1,p}(J_\psi,\{\ell_\psi\})
			\le
			\frac{\int_{J_\psi}|(\psi^*)'|^p\,dx}
			{\int_{J_\psi}|\psi^*|^p\,dx}
			\le
			\lambda_{1,p}(\mathcal G,\mathcal V^D).
		\end{equation}
		Combining \eqref{eq:Modtor} with Lemma~\ref{lem:modtorsion}, we conclude
		that the comparison interval also has smaller torsional rigidity:
		\begin{equation}
			\label{eq:KJ_torsion_comparison}
			T_p(J_\psi,\{\ell_\psi\})
			\le
			T_p(\mathcal G,\mathcal V^D).
		\end{equation}
		It remains to identify the value of the Kohler--Jobin product on the
		interval $J_\psi$. By Lemma \ref{lem:KJscaling}, the
		quantity
		\(
		\lambda_{1,p}(J,\mathcal V^D)\,
		T_p(J,\mathcal V^D)^{\frac{p}{2p-1}}
		\)
		is invariant under changes of the length of the interval. Hence the
		Dirichlet--Neumann interval $(J_\psi,\{\ell_\psi\})$ has the same
		Kohler--Jobin product as the unit model interval used in
		\eqref{def:KJconst}, that is,
		\begin{equation}
			\label{eq:KJ_interval_value}
			\lambda_{1,p}(J_\psi,\{\ell_\psi\})\,
			T_p(J_\psi,\{\ell_\psi\})^{\frac{p}{2p-1}}
			=
			C_p^{\mathrm{KJ}}.
		\end{equation}
		Combining \eqref{eq:KJ_interval_value} with
		\eqref{eq:KJ_lambda_comparison} and \eqref{eq:KJ_torsion_comparison},
		and using that the map $s\mapsto s^{p/(2p-1)}$ is increasing on
		$(0,\infty)$, we obtain
		\[
		C_p^{\mathrm{KJ}}
		\le
		\lambda_{1,p}(\mathcal G,\mathcal V^D)\,
		T_p(\mathcal G,\mathcal V^D)^{\frac{p}{2p-1}}.
		\]
Finally we identify the equality cases. If $\mathcal G$ is an interval
	with a single Dirichlet endpoint, then equality in \eqref{eq:KJ} follows
	immediately from the scale invariance of the Kohler--Jobin product together
	with \eqref{def:KJconst}. Conversely, suppose that equality holds in
	\eqref{eq:KJ}. Then, by
	\eqref{eq:KJ_interval_value}, \eqref{eq:KJ_lambda_comparison}, and
	\eqref{eq:KJ_torsion_comparison}, every inequality in
\begin{align*}
	C_p^{\mathrm{KJ}}
	&=
	\lambda_{1,p}(J_\psi,\{\ell_\psi\})\,
	T_p(J_\psi,\{\ell_\psi\})^{\frac{p}{2p-1}}
	\\
	&\le
	\lambda_{1,p}(\mathcal G,\mathcal V^D)\,
	T_p(J_\psi,\{\ell_\psi\})^{\frac{p}{2p-1}}
	\\
	&\le
	\lambda_{1,p}(\mathcal G,\mathcal V^D)\,
	T_p(\mathcal G,\mathcal V^D)^{\frac{p}{2p-1}}
	\\
	&=
	C_p^{\mathrm{KJ}}.
\end{align*}
	must in fact be an equality. In particular,
	\(
	T_p(J_\psi,\{\ell_\psi\})
	=
	T_p(\mathcal G,\mathcal V^D),
	\)
	and equality holds in \eqref{eq:KJrear2}. By the layer-cake argument in the
	proof of Lemma~\ref{lem:KJrearrange}, this implies that
	$\kappa\equiv\varphi$. Comparing the derivatives in the defining relation
	for $\kappa$, we obtain
	\[
	\alpha(\varphi(\tau))=R(\tau)
	\]
	for almost every $\tau\in(0,I(0))$. Passing to the limit as
	$\tau\to I(0)$ and using the fact that
	$\alpha(t)\to|\mathcal G|=:L$ as $t\to0$, we deduce that
	\(
	\ell_\psi=R(I(0))=L.
	\)
	Consequently,
	\[
	T_p(\mathcal G,\mathcal V^D)
	=
	T_p(J_\psi,\{\ell_\psi\})
	=
	\left(\frac{L^{q+1}}{q+1}\right)^{p-1}.
	\]
	Hence equality also holds in the Saint-Venant inequality \eqref{eq:SV}, and
	the conclusion follows from Theorem~\ref{thm:SV}.
	\end{proof}


\begin{thebibliography}{99}
	
	\bibitem{AST17}
	R.~Adami, E.~Serra, and P.~Tilli,
	\textit{Nonlinear dynamics on branched structures and networks},
	Rend.\ Semin.\ Mat.\ Univ.\ Politec.\ Torino \textbf{75} (2017),
	no.~1--2, 65--115.
	
	\bibitem{BKKM17}
	G.~Berkolaiko, J.~B.~Kennedy, P.~Kurasov, and D.~Mugnolo,
	\textit{Edge connectivity and the spectral gap of combinatorial and
		quantum graphs},
	J.\ Phys.\ A \textbf{50} (2017), no.~36, 365201.
	
	\bibitem{BK}
	G.~Berkolaiko and P.~Kuchment,
	\textit{Introduction to Quantum Graphs},
	Math.\ Surveys Monogr., vol.~186, Amer.\ Math.\ Soc., Providence, RI,
	2013.
	
	\bibitem{BM25}
	P.~Bifulco and D.~Mugnolo,
	\textit{On the $p$-torsional rigidity of combinatorial graphs},
	Nonlinear Anal.\ \textbf{251} (2025), article no.~113694.
	
	\bibitem{Brasco}
	L.~Brasco,
	\textit{On torsional rigidity and principal frequencies: an invitation
		to the Kohler--Jobin rearrangement technique},
	ESAIM Control Optim.\ Calc.\ Var.\ \textbf{20} (2014), no.~2, 315--338.
	
	\bibitem{DR}
	L.~M.~Del Pezzo and J.~D.~Rossi,
	\textit{The first eigenvalue of the $p$-Laplacian on quantum graphs},
	Anal.\ Math.\ Phys.\ \textbf{6} (2016), no.~4, 365--391.
	
	\bibitem{FGL13}
	I.~Fragal\`a, F.~Gazzola, and J.~Lamboley,
	\textit{Sharp bounds for the $p$-torsion of convex planar domains},
	in: Geometric Properties for Parabolic and Elliptic PDEs,
	Springer INdAM Ser., vol.~2, Springer, Milan, 2013, pp.~97--115.
	
	\bibitem{KJ}
	M.-T.~Kohler-Jobin,
	\textit{Une m\'ethode de comparaison isop\'erim\'etrique de
		fonctionnelles de domaines de la physique math\'ematique},
	Z.\ Angew.\ Math.\ Phys.\ \textbf{29} (1978), no.~5, 757--766.
	
	\bibitem{kuch1}
	P.~Kuchment,
	\textit{Quantum graphs: I. Some basic structures},
	Waves Random Media \textbf{14} (2004), no.~1, S107--S128.
	
	\bibitem{kuch2}
	P.~Kuchment,
	\textit{Quantum graphs: an introduction and a brief survey},
	in: Analysis on Graphs and Its Applications,
	Proc.\ Sympos.\ Pure Math., vol.~77, Amer.\ Math.\ Soc.,
	Providence, RI, 2008, pp.~291--314.
	
	\bibitem{LE11}
	J.~Lang and D.~E.~Edmunds,
	\textit{Eigenvalues, Embeddings and Generalised Trigonometric
		Functions},
	Lecture Notes in Math., vol.~2016, Springer, Heidelberg, 2011.
	
	\bibitem{Lindqvist}
	P.~Lindqvist,
	\textit{Notes on the Stationary $p$-Laplace Equation},
	SpringerBriefs Math., Springer, Cham, 2019.
	
	\bibitem{MuPl}
	D.~Mugnolo and M.~Pl\"umer,
	\textit{On torsional rigidity and ground-state energy of compact
		quantum graphs},
	Calc.\ Var.\ Partial Differ.\ Equ.\ \textbf{62} (2023), no.~1,
	Paper no.~27, 37~pp.
	
	\bibitem{OT24}
	S.~\"Ozcan and M.~T\"aufer,
	\textit{Torsional rigidity on metric graphs with $\delta$-vertex
		conditions},
	J.\ Math.\ Phys.\ \textbf{67} (2026), no.~6, 061508,
	doi:10.1063/5.0301900.
	
	\bibitem{Polya}
	G.~P\'olya,
	\textit{Torsional rigidity, principal frequency, electrostatic
		capacity and symmetrization},
	Quart.\ Appl.\ Math.\ \textbf{6} (1948), 267--277.
	
	\bibitem{PS}
	G.~P\'olya and G.~Szeg\H{o},
	\textit{Isoperimetric Inequalities in Mathematical Physics},
	Ann.\ of Math.\ Stud., vol.~27, Princeton Univ.\ Press, Princeton,
	NJ, 1951.
	
	\bibitem{pol3}
	G.~P\'olya and A.~Weinstein,
	\textit{On the torsional rigidity of multiply connected
		cross-sections},
	Ann.\ of Math.\ (2) \textbf{52} (1950), no.~1, 154--163.
	
	\bibitem{SV}
	B.~de Saint-Venant,
	\textit{M\'emoire sur la torsion des prismes},
	M\'em.\ Savants \'Etrangers \textbf{14} (1855), 233--560.
	
	\bibitem{Vazquez}
	J.~L.~V\'azquez,
	\textit{A strong maximum principle for some quasilinear elliptic
		equations},
	Appl.\ Math.\ Optim.\ \textbf{12} (1984), no.~3, 191--202.
	
\end{thebibliography}
\end{document}